\definecolor{e-mail}{rgb}{0,.40,.80}
\definecolor{reference}{rgb}{.20,.60,.22}
\definecolor{citation}{rgb}{0,.40,.80}
\newtheorem{thm}{Theorem}
\newtheorem{cor}[thm]{Corollary}
\newtheorem{lem}[thm]{Lemma}
\newtheorem{prop}[thm]{Proposition}
\theoremstyle{definition}
\newtheorem{defn}[thm]{Definition}
\theoremstyle{remark}
\numberwithin{thm}{section}
\theoremstyle{definition}
\numberwithin{equation}{section}
\title[Galois-theoretic proof of differential transcendence of incomplete Gamma function]{A Galois-theoretic proof of the differential transcendence \\ of the incomplete Gamma function}
\author{Carlos E. Arreche}
\email{carreche@gc.cuny.edu}
\address{Mathematics Department, The Graduate Center of the City University of New York, New York, NY 10016}
\keywords{Differential Galois theory, Parameterized Picard-Vessiot theory, linear differential algebraic groups, incomplete Gamma function, differential transcendence.}
\subjclass[2010]{Primary 34M15; Secondary 34M03, 12H20, 35A24, 33B20, 20H20, 37K20}
\thanks{This material is based upon work partially supported by a National Science Foundation (NSF) Graduate Research Fellowship (grant 40017-04-05) and NSF grant CCF-0952591.}
\begin{document}

\begin{abstract} We give simple necessary and sufficient conditions for the $\frac{\partial}{\partial t}$-transcendence of the solutions to a parameterized second order linear differential equation of the form \[\frac{\partial^2 Y}{\partial x^2}-p\frac{\partial Y}{\partial x}=0,\] where $p\in F(x)$ is a rational function in $x$ with coefficients in a $\frac{\partial}{\partial t}$-field $F$. This result is crucial for the development of an efficient algorithm to compute the parameterized Picard-Vessiot group of an arbitrary parameterized second-order linear differential equation over $F(x)$. Our criteria imply, in particular, the $\frac{\partial}{\partial t}$-transcendence of the incomplete Gamma function $\gamma(t,x)$, generalizing a result of \cite{johnson:1995}.
\end{abstract}

\maketitle

%%% INTRODUCTION %%%--------------------

\section{Introduction} \label{intro}

The \emph{incomplete Gamma function} $\gamma(t,x)$ is defined by \begin{equation*} \gamma(t,x):=\int_0^xs^{t-1}e^{-s}ds  \end{equation*} for $\mathrm{Re}(t)>0$, and extended analytically to a multivalued meromorphic function on $\mathbb{C}\times\mathbb{C}$. It satisfies the second-order linear differential equation \begin{equation*}   \frac{\partial^2\gamma}{\partial x^2}-\frac{t-1-x}{x}\frac{\partial \gamma}{\partial x}=0. \end{equation*} The question arises whether $\gamma(t,x)$ satisfies any polynomial $\frac{\partial}{\partial t}$-differential equations (with coefficients in some differential field of interest). In \cite{johnson:1995}, the authors give two proofs of the following result, one analytic, and the other differential-algebraic (see \cite[Thm. 2]{johnson:1995}):\begin{thm} \label{original}The incomplete Gamma function $\gamma(t,x)$ is $\frac{\partial}{\partial t}$-transcendental over $\mathbb{C}(x,t)$.\end{thm}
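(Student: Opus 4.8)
The plan is to deduce the theorem from the parameterized Picard--Vessiot (PPV) machinery applied to
\[\frac{\partial^2 Y}{\partial x^2}-\frac{t-1-x}{x}\frac{\partial Y}{\partial x}=0\]
over the $\Delta=\{\partial_x,\partial_t\}$-field $K=\mathbb{C}(t)(x)$, reading off the $\frac{\partial}{\partial t}$-transcendence of $\gamma$ from the size of the PPV Galois group. A fundamental system of solutions is $\{1,\gamma\}$, and upon setting $v:=\partial_x\gamma=x^{t-1}e^{-x}$ one obtains the first-order equation $\partial_x v=pv$ with $p=\frac{t-1}{x}-1$ and $\gamma=\int v$. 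Since $1\in K$ is a rational solution and $\partial_x$ maps the two-dimensional solution space onto the line spanned by $v$, the PPV group $G$ is a linear differential algebraic group (over the $\frac{\partial}{\partial t}$-field $\mathbb{C}(t)$) contained in the Borel subgroup $\bigl\{\left(\begin{smallmatrix}a&b\\0&1\end{smallmatrix}\right)\bigr\}\subseteq\mathrm{GL}_2$, with the torus coordinate $a$ governing $v$ and the unipotent coordinate $b$ governing the integral $\gamma$.

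First I would record the behavior of $v$ under $\partial_t$: from $\partial_t\log v=\log x$ and $\partial_t\log x=0$ one gets $\partial_t^k v=v(\log x)^k$, so $v$ is $\frac{\partial}{\partial t}$-algebraic over $K$ (for instance $v\,\partial_t^2 v=(\partial_t v)^2$) and the torus part of $G$ is a proper subgroup of $\mathbb{G}_m$; the transcendence we seek therefore cannot come from $v$. In particular the intermediate $\frac{\partial}{\partial t}$-field $\mathcal{K}:=K\langle v\rangle_{\partial_t}$ equals $\mathbb{C}(t)(x,\log x,v)$, a tower in which $\log x$ is a logarithmic monomial ($\partial_x\log x=1/x$) and $v$ is an exponential monomial ($\partial_x v=pv$). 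Because $K\subseteq\mathcal{K}$, it suffices to prove that $\gamma$ is $\frac{\partial}{\partial t}$-transcendental over the larger field $\mathcal{K}$.

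The heart of the matter is the unipotent part. The extension of $\mathcal{K}$ by $\gamma$, where $\partial_x\gamma=v\in\mathcal{K}$, is a parameterized extension whose Galois group $H$ is a $\frac{\partial}{\partial t}$-closed subgroup of $\mathbb{G}_a$. By Kolchin's classification, $H$ is either all of $\mathbb{G}_a$ or $H=\{y:\mathcal{L}(y)=0\}$ for some nonzero linear operator $\mathcal{L}\in\mathbb{C}(t)[\partial_t]$, and $\gamma$ is $\frac{\partial}{\partial t}$-transcendental over $\mathcal{K}$ exactly when $H=\mathbb{G}_a$. Via the PPV correspondence, $H\neq\mathbb{G}_a$ if and only if there is a nonzero $\mathcal{L}$ with $\mathcal{L}(\gamma)\in\mathcal{K}$; applying $\partial_x$ and using $[\partial_x,\partial_t]=0$, this is equivalent to $\mathcal{L}(v)$ admitting a $\partial_x$-antiderivative in $\mathcal{K}$. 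Writing $\mathcal{L}=\sum_i c_i\partial_t^i$ and $P(\lambda)=\sum_i c_i\lambda^i$, so that $\mathcal{L}(v)=v\,P(\log x)$, the entire theorem reduces to the following computational claim: \emph{for every nonzero $P\in\mathbb{C}(t)[\lambda]$, the element $v\,P(\log x)$ is not of the form $\partial_x h$ for any $h\in\mathcal{K}$.}

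This integrability claim is the step I expect to be the main obstacle. I would attack it by a Risch-style reduction: since $v$ is an exponential monomial, any antiderivative must have the shape $h=vQ$ with $Q\in\mathbb{C}(t)(x,\log x)$, and since $p$ has no $\log x$-dependence, $Q$ may be taken polynomial in $\lambda=\log x$. Expanding $Q=\sum_{j\le m}q_j\lambda^j$ and comparing coefficients of powers of $\lambda$ in the Risch differential equation $\partial_x Q+pQ=P(\lambda)$, the top-degree coefficients force $m=\deg P$ and leave one to solve $\partial_x q_m+p\,q_m=c$ for a nonzero constant $c\in\mathbb{C}(t)$ with $q_m\in\mathbb{C}(t)(x)$. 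The contradiction then comes from a local analysis: at any $x_0\neq0$ a pole of $q_m$ would be promoted by the $\partial_x q_m$ term to a higher-order pole on the left, which the regular right side cannot match; regularity at $x=0$ and the degree-$0$ behavior as $x\to\infty$ (here the transcendence of $t$ over $\mathbb{C}$ enters, ruling out poles at $x=0$ and ensuring the homogeneous solution $x^{1-t}e^{x}$ is irrational) then force $q_m$ to be constant, which is impossible for $c\neq0$. Establishing this no-solution statement cleanly---in particular the reduction to polynomiality in $\lambda$ and the pole-order bookkeeping---is where the real work lies; everything else is formal PPV theory, and the conclusion $H=\mathbb{G}_a$ yields the $\frac{\partial}{\partial t}$-transcendence of $\gamma$ over $\mathbb{C}(x,t)$.
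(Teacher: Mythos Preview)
Your proposal is essentially correct and lands on the same decisive computation as the paper---the nonexistence of a rational solution to the Risch equation $\partial_x r + p\,r = 1$---but you reach it by a different route. The paper first proves a general criterion (its Theorem~3.2): $\eta$ is $\partial$-transcendental over $K$ if and only if neither $\delta Y=\partial p$ nor $\delta Y+pY=1$ has a solution in $K$. The proof of that criterion relies on structural results about which linear differential algebraic groups can occur as PPV groups over $F(x)$ (specifically results of Singer and Hardouin--Singer ruling out certain Borel-type groups), and only afterwards specializes to $p=(t-1-x)/x$. You instead bypass those structural results by carrying out an explicit Risch-style reduction inside the concrete tower $\mathcal{K}=\mathbb{C}(t)(x,\log x,v)$, peeling off first the exponential monomial $v$ and then the logarithmic monomial $\log x$; the paper's condition~(1) (that $\partial_t p=1/x$ has no rational antiderivative) appears in your argument only implicitly, as the transcendence of $\log x$ that makes the coefficient comparison in $\lambda$ legitimate. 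Your approach is more self-contained for the specific $\gamma(t,x)$ question, while the paper's yields a clean criterion for arbitrary $p\in F(x)$. One technical point you should not skip: you invoke PPV theory and Cassidy's classification of subgroups of $\mathbb{G}_a$ over the constant field $\mathbb{C}(t)$, which is not $\partial_t$-closed; the paper handles this by passing to a differential closure and descending (its Corollary~3.3). In your line of argument the only place this really bites is the no-new-constants hypothesis needed to apply Kolchin--Ostrowski to the antiderivatives $\partial_t^j\gamma$ over $\mathcal{K}$, so you should justify that $\mathcal{K}^{\partial_x}=\mathcal{K}\langle\gamma\rangle_{\partial_t}^{\partial_x}$ (or, more simply, rephrase the reduction so that the Galois group is never actually needed and Kolchin--Ostrowski is applied inside an ambient analytic field whose $\partial_x$-constants you control).
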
 

We will give a differential-algebraic proof of a stronger statement (Theorem~\ref{main}), as an application of the parameterized Picard-Vessiot theory developed in \cite{cassidy-singer:2006}. This differential Galois theory for parameterized linear differential equations is a generalization of the classical Picard-Vessiot theory \cite{kolchin:1948, vanderput-singer:2003}, and a special case of the theories presented in \cite{hardouin-singer:2008, landesman:2008}.

In \cite{hardouin-singer:2008}, the authors develop a Galois theory for (parameterized) difference equations, and apply it towards a novel proof \cite[Cor. 3.4.1]{hardouin-singer:2008} of H\"{o}lder's classical result on the $\frac{\partial}{\partial t}$-transcendence of the \emph{Gamma function} $\Gamma(t)$, on the basis that it satisfies the difference equation $\Gamma(t+1)=t\Gamma(t)$. Since the difference Galois group measures the algebraic dependencies among the derivatives of the solutions to this difference equation, the differential transcendence of $\Gamma(t)$ can be read off the difference Galois group (see \cite[\S3.1]{hardouin-singer:2008} for more details). We will follow an analogous strategy in our new proof of the differential transcendence of $\gamma(t,x)$.

Let us briefly describe the contents of the present work. In \S\ref{prelim}, we will review some terminology from differential algebra and summarize some results from the parameterized Picard-Vessiot theory \cite{cassidy-singer:2006} and the theory of linear differential algebraic groups \cite{cassidy:1972} that we will need to apply later on. In \S\ref{section:main} we will set the notation to be used for the rest of the paper, and deduce our main result (Theorem~\ref{main}) from Propositions~\ref{equiv-1} and~\ref{equiv-2}, which will be proved in \S\ref{section:proofs}. Theorem~\ref{main} states that if $\eta$ satisfies \begin{equation}\label{intro-eq} \delta^2\eta-p\delta\eta=0, \quad \delta\eta\neq 0;\end{equation} where $p\in K:=F(x)$, the $\{\delta,\partial\}$-field\footnote{The $\{\delta,\partial\}$-field structure of $K$ is defined by setting $\delta x=1$, $\partial x=0$, and $\delta|_{F}=0$ (see \S\ref{section:main}).} of rational functions in $x$ with coefficients in a $\partial$-closed\footnote{\label{note-1}These notions are defined in \S\ref{prelim}.} $\partial$-field $F$, then $\eta$ is $\partial$-transcendental\textsuperscript{\ref{note-1}} over $K$ if and only if none of the equations $\delta Y=\partial p$ and $\delta Y+pY=1$ admits a solution in $K$. In Corollary~\ref{original-cor} we drop the assumption that the $\delta$-constants\textsuperscript{\ref{note-1}} are $\partial$-closed, at the cost of obtaining only a sufficient criterion for the $\partial$-transcendence of $\eta$ over the ground field. Theorem~\ref{original} is proved as a straightforward consequence of Corollary~\ref{original-cor}.

The proof of Theorem~\ref{main} will be given in two steps. First, we prove in Proposition~\ref{equiv-1} that $\eta$ is $\partial$-transcendental if and only the parameterized Picard-Vessiot group ($\mathrm{PPV}$-group) corresponding to~\eqref{intro-eq} is ``large enough.'' Proposition~\ref{equiv-2} states that the largeness condition of Proposition~\ref{equiv-1} holds if and only if none of the equations $\delta Y=\partial p$ and $\delta Y+pY=1$ admits a solution in $K$.

Theorem~\ref{main} is a small (but crucial) part of a complete algorithm to compute the $\mathrm{PPV}$-group of a linear differential equation of the form \begin{equation}\label{general-eq-2}  \frac{\partial^2Y}{\partial x^2}+r_1\frac{\partial Y}{\partial x} +r_2Y=0, \end{equation} where $r_1,r_2\in K$. Most of this algorithm was developed in \cite{dreyfus:2011}, in the setting of several parametric derivations, but under the assumption that $r_1=0$. This restriction will be removed in a forthcoming paper (see \cite{arreche:2012} for a preliminary version), in the case of a single parametric derivation. See also \cite{min-ov-sing:2013}, where the authors describe how to compute higher dimensional $\mathrm{PPV}$-groups $\Gamma$ under the assumption that $\Gamma/R_u(\Gamma)$ is constant,\footnote{This is a generalization of the situation described by the equivalent conditions of Lemma~\ref{const-diag}.} where $R_u(\Gamma)$ denotes the unipotent radical of $\Gamma$ (see\cite[Def. 2.1 and \S2]{min-ov-sing:2013} for more details). In a different direction, the authors of \cite{gor-ov:2012} show how to check whether a system is isomonodromic\footnote{This is also defined as completely integrable system in \cite[Def. 3.8]{cassidy-singer:2006} (cf. the proof of Lemma~\ref{const-diag}).} by working with one parametric derivation at a time.

We have isolated these criteria of Theorem~\ref{main} from the rest of the algorithm \cite{arreche:2012,dreyfus:2011} to compute the $\mathrm{PPV}$-group of \eqref{general-eq-2} because of their independent interest and relative simplicity. Although the complete algorithm is somewhat involved, an effective test for differential transcendence such as Theorem~\ref{main} or Corollary \ref{original-cor} is already quite useful in algorithmic applications. Indeed, the main motivation of \cite{johnson:1995} is to decide when a system of algebraic differential equations can be extended to what they call an \emph{algebraic Mayer system}, i.e., a system of partial differential equations whose solutions are differentially algebraic with respect to each derivation (see \cite[\S1]{johnson:1995} for more details); they prove Theorem~\ref{original} as one of several counterexamples to show that this cannot always be done. The parameterized differential Galois theories presented in \cite{cassidy-singer:2006,hardouin-singer:2008,landesman:2008} (for example) provide a very natural setting for the study of such questions.

%%% PRELIMINARIES %%%--------------------

\section{Preliminaries}\label{prelim}

We refer to \cite{kaplansky:1976,vanderput-singer:2003} for more details concerning the following definitions. Every field considered in this work is assumed to be of characteristic zero. A field $K$ equipped with a finite set $\Delta:=\{\delta_1,\dots,\delta_m\}$ of pairwise commuting derivations (i.e., $\delta_i(ab)=a\delta_i (b)+ \delta_i( a)b$ and $\delta_i\delta_j=\delta_j\delta_i$ for each $a,b\in K$ and $1\leq i,j \leq m$) is called a $\Delta$\emph{-field}. We will often omit the parenthesis, and simply write $\delta a$ for $\delta(a)$. For $\Pi\subseteq\Delta$, we will denote the subfield of $\Pi$\emph{-constants} of $K$ by $K^\Pi:=\{a\in K \ | \ \delta a=0, \ \delta\in\Pi\}$. In case $\Pi=\{\delta\}$ is a singleton, we write $K^\delta$ instead of $K^\Pi$.

If $M$ is a $\Delta$-field and $K$ is a subfield such that $\delta(K)\subset K$ for each $\delta\in \Delta$, we say $K$ is a $\Delta$\emph{-subfield} of $M$ and $M$ is a $\Delta$\emph{-field extension} of $K$. If $y_1,\dots,y_n\in M$, we denote by \[K\langle y_1,\dots,y_n\rangle_\Delta\subseteq M\] the $\Delta$-subfield of $M$ generated over $K$ by all the derivatives of the $y_i$. We say that $y\in M$ is $\delta$\emph{-transcendental} over $K$ if the elements $y,\delta y,\delta^2y,\dots$ are algebraically independent over $K$. 

We say that $K$ is $\Delta$\emph{-closed} if every system of polynomial differential equations defined over $K$ which admits a solution in some $\Delta$-field extension of $K$ already has a solution in $K$. This last notion is discussed at length in \cite{kolchin:1974}. See \cite{cassidy-singer:2006} for a brief discussion, and more references.

We will not need to apply the parameterized Picard-Vessiot theory of \cite{cassidy-singer:2006} in its full generality, so let us briefly summarize the main facts that we will need. We work over a differential field $K$ equipped with a pair of commuting derivations $\Delta:=\{\delta,\partial\}$. We will sometimes refer to $\delta$ (resp., $\partial$) as the main (resp., parametric) derivation. Consider a linear differential equation with respect to the main derivation\begin{equation}\label{ppv-eq} \delta^nY+\sum_{i=0}^{n-1}r_i\delta^iY=0,\end{equation} where $r_i\in K$ for each $0\leq i\leq n-1$.

\begin{defn}  We say that a $\Delta$-field extension $M\supseteq K$ is a \emph{parameterized Picard-Vessiot extension} (or $\mathrm{PPV}$-extension) of $K$ for~\eqref{ppv-eq} if:
\begin{enumerate} 
\item There exist $n$ distinct, $K^\delta$-linearly independent elements $y_1,\dots, y_n\in M$ such that $\delta^ny_j+\sum_ir_i\delta^iy_j=0$ for each $1\leq j \leq n$.
\item $M=K\langle y_1,\dots y_n\rangle_\Delta$.
\item $M^\delta=K^\delta$.
\end{enumerate}

The \emph{parameterized Picard-Vessiot group} (or $\mathrm{PPV}$-group) is the group of $\Delta$-automorphisms of $M$ over $K$, and will be denoted by $\mathrm{Gal}_\Delta(M/K)$. The $K^\delta$-linear span of all the $y_j$ is the \emph{solution space}, and will be denoted by $\mathcal{S}$. \end{defn} 

 If $K^\delta$ is $\partial$-closed,\footnote{Although this assumption allows for a simpler exposition of the theory, several authors \cite{gill-gor-ov:2012, wibmer:2011} have shown that the parameterized Picard-Vessiot theory can be developed without assuming that $K^\delta$ is $\partial$-closed.} it is shown in \cite{cassidy-singer:2006} that a $\mathrm{PPV}$-extension and $\mathrm{PPV}$-group for \eqref{ppv-eq} over $K$ exist and are unique up to $K$-$\Delta$-isomorphism. The action of $\mathrm{Gal}_\Delta(M/K)$ is determined by its restriction to $\mathcal{S}$, which defines an embedding $\mathrm{Gal}_\Delta(M/K)\hookrightarrow\mathrm{GL}_n(K^\delta)$ after choosing a $K^\delta$-basis for $\mathcal{S}$. It is shown in \cite{cassidy-singer:2006} that this embedding identifies the $\mathrm{PPV}$-group with a linear differential algebraic group (Definition~\ref{ldag-def}), and from now on we will make this identification implicitly.

\begin{defn} \label{ldag-def}Let $F$ be a differentially closed $\partial$-field. We say that a subgroup $\Gamma\subseteq \mathrm{GL}_n(F)$ is a \emph{linear differential algebraic group} if $\Gamma$ is defined as a subset of $\mathrm{GL}_n(F)$ by the vanishing of a system of polynomial differential equations in the matrix entries, with coefficients in $F$. \end{defn}

The theory of linear differential  algebraic groups was pioneered in \cite{cassidy:1972} (see also \cite{kolchin:1984}). There is a parameterized Galois correspondence \cite[Thm. 3.5]{cassidy-singer:2006} between the linear differential algebraic subgroups $\Gamma$ of $\mathrm{Gal}_\Delta(M/K)$ and the intermediate $\Delta$-fields $K\subseteq L\subseteq M$, given by $\Gamma\mapsto M^\Gamma$ and $L\mapsto \mathrm{Gal}_\Delta(M/L)$. Under this correspondence, an intermediate $\Delta$-field $L$ is a $\mathrm{PPV}$-extension of $K$ (for some linear differential equation) if and only if $\mathrm{Gal}_\Delta(M/L)$ is normal in $\mathrm{Gal}_\Delta(M/K)$, and in this case the restriction map $\sigma\mapsto\sigma|_L:\mathrm{Gal}_\Delta(M/K)\twoheadrightarrow\mathrm{Gal}_\Delta(L/K)$ is surjective, with kernel given by $\mathrm{Gal}_\Delta(M/L)$.

The following classification theorems give many non-trivial examples of linear differential algebraic groups. We still assume that $F$ is a differentially closed $\partial$-field.

\begin{thm}[(Cassidy {\cite[Prop. 11]{cassidy:1972}})]\label{dag1}
Let $B$ be a differential algebraic subgroup of $\mathbb{G}_a(F )$, the additive group of $F$. Then, either $B=\mathbb{G}_a(F)$, or else there exists a unique nonzero monic operator $ \mathcal{D}\in F [\partial]$ such that \begin{equation*}B=\{b\in \mathbb{G}_a(K )\  | \  \mathcal{D}b=0\}.\end{equation*}
\end{thm}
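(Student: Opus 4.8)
The plan is to work with the radical differential ideal $J \subseteq F\{y\}$ that cuts out $B$, which exists by the differential Nullstellensatz since $F$ is differentially closed, so that $B = \{b \in F : f(b) = 0 \text{ for all } f \in J\}$. The strategy is to show that, at the level of zero sets, $J$ is generated by its \emph{linear} elements, and to package those linear elements as a left ideal of the ring $F[\partial]$ of linear differential operators. Recall that $F[\partial]$ is a (noncommutative) left principal ideal domain via the Euclidean division algorithm on order, and set $I_{\mathrm{lin}} := \{\mathcal E \in F[\partial] : \mathcal E(b) = 0 \text{ for all } b \in B\}$; one checks immediately that $I_{\mathrm{lin}}$ is a left ideal, hence $I_{\mathrm{lin}} = F[\partial]\mathcal D$ for a unique monic $\mathcal D$ (or else $I_{\mathrm{lin}} = (0)$). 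The entire content is to prove that $B = \ker\mathcal D$, respectively that $B = \mathbb{G}_a(F)$ when $I_{\mathrm{lin}} = (0)$.

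First I would establish two structural reductions. \emph{(i) $B$ is an $F^\partial$-subspace of $F$.} Since $F^\partial$ is algebraically closed, $\mathbb{Z}$ is Kolchin-dense in $\mathbb{G}_a(F^\partial)$; for fixed $b \in B$ the orbit map $t \mapsto tb$ is a morphism carrying the dense subset $\mathbb{Z}b \subseteq B$ into the closed set $B$, so the whole line $F^\partial b$ lies in $B$. \emph{(ii) $J$ is graded by total degree.} For $b \in B$ and $c \in F^\partial$ we have $cb \in B$ by (i), and since $c$ is a $\partial$-constant, $f(cb) = \sum_e c^e f_e(b)$, where $f_e$ denotes the degree-$e$ homogeneous part of $f$; as $F^\partial$ is infinite this forces $f_e(b) = 0$, so each $f_e$ lies in $J$. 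Because $0 \in B$, no nonzero element of $J$ has a constant term, so every nonzero homogeneous element of $J$ has degree $\geq 1$.

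The main obstacle is to show that, if $B \neq \mathbb{G}_a(F)$ (equivalently $J \neq (0)$), then the minimal degree of a nonzero homogeneous element of $J$ is exactly $1$, so that $I_{\mathrm{lin}} \neq (0)$. This is a polarization argument. Let $f \in J$ be homogeneous of minimal degree $d \geq 1$, and suppose $d \geq 2$. For $b, b_0 \in B$ the element $c_1 b + c_2 b_0$ lies in $B$ for all $c_1, c_2 \in F^\partial$, and extracting the $c_1 c_2^{d-1}$-coefficient of $f(c_1 b + c_2 b_0) = 0$ yields $\sum_j b^{(j)}\,\frac{\partial f}{\partial y^{(j)}}(b_0) = 0$ (up to a nonzero integer factor). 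Thus for each fixed $b_0 \in B$ the operator $\mathcal L_{b_0} := \sum_j \bigl(\tfrac{\partial f}{\partial y^{(j)}}(b_0)\bigr)\partial^j$ lies in $I_{\mathrm{lin}}$. If some $\mathcal L_{b_0} \neq 0$, it is a nonzero linear element of $J$, contradicting minimality of $d \geq 2$; otherwise all the partials $\frac{\partial f}{\partial y^{(j)}}$ vanish on $B$, hence lie in $J$ with degree $d - 1 \geq 1$, so minimality forces them to vanish identically, making $f$ a nonzero constant — again impossible. Either way $d \geq 2$ is untenable, so $d = 1$ and $I_{\mathrm{lin}} \neq (0)$.

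It remains to identify $B$ with $\ker\mathcal D$. A nonzero $\mathcal L \in I_{\mathrm{lin}}$ gives $B \subseteq \ker\mathcal L$, and since $F$ is differentially closed the solution space of an order-$r$ operator has dimension exactly $r$ over $F^\partial$; hence $n := \dim_{F^\partial} B \leq \mathrm{ord}(\mathcal L) < \infty$. Choosing an $F^\partial$-basis $b_1, \dots, b_n$ of $B$, the monic Wronskian operator $\mathcal D$ of order $n$ defined by $\mathcal D(y) = \mathrm{Wr}(b_1, \dots, b_n, y)/\mathrm{Wr}(b_1, \dots, b_n)$ has coefficients in $F$ and satisfies $\ker\mathcal D = B$ exactly. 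This $\mathcal D$ is the monic generator of $I_{\mathrm{lin}}$, since $\mathcal E$ vanishes on $\ker\mathcal D$ if and only if $\mathcal D$ right-divides $\mathcal E$. Uniqueness of the monic $\mathcal D$ is then immediate: any second monic operator with kernel $B$ has the same order $n$, and their difference, of order $< n$, would vanish on the $n$-dimensional space $B$ and hence be zero. Finally, when $I_{\mathrm{lin}} = (0)$ the argument above forces $J = (0)$, so $B = \mathbb{G}_a(F)$, completing the dichotomy.
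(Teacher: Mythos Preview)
The paper does not give its own proof of this statement; it is quoted verbatim from Cassidy \cite[Prop.~11]{cassidy:1972} as a background result, so there is nothing in the paper to compare your argument against.

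That said, your proof is correct and is essentially the classical argument. The four steps --- (i) $B$ is an $F^\partial$-subspace because $\mathbb{Z}$ is Kolchin-dense in $\mathbb{G}_a(F^\partial)$ and $\mathbb{Z}b\subseteq B$; (ii) the vanishing ideal $J=I(B)$ is homogeneous since $F^\partial$-rescalings preserve $B$; (iii) polarization forces the minimal homogeneous degree in $J$ to be $1$; (iv) the Wronskian of an $F^\partial$-basis of $B$ produces the monic $\mathcal{D}$ --- are all sound. Two cosmetic remarks: in your Case~2 of the polarization step, once all partials $\partial f/\partial y^{(j)}$ vanish identically, Euler's identity gives $d\cdot f=\sum_j y^{(j)}\,\partial f/\partial y^{(j)}=0$, so $f=0$ directly (rather than ``$f$ a nonzero constant''); and the ``nonzero integer factor'' you hedge against in extracting the $c_1c_2^{d-1}$-coefficient is in fact $1$, as one checks by expanding a single monomial. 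Neither affects the validity of the argument.
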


\begin{thm}[(Cassidy {\cite[Prop. 31 and its Corollary]{cassidy:1972}})]\label{dag2}Let $A$ be a proper differential algebraic subgroup of $\mathbb{G}_m(F )$, the multiplicative group of $F$. Then, either $A=\nobreak\mu_n\subset F^\times$, the group of $n^{\text{th}}$ roots of unity for some $n\in\mathbb{N}$, or else there exists a unique nonzero monic operator $ \mathcal{D}\in K [\partial]$ such that \begin{equation*}A=\left\{a\in\mathbb{G}_m(F )\  \middle|\  \mathcal{D}\left(\tfrac{\partial a}{a}\right)=0\right\}.\end{equation*}
\end{thm}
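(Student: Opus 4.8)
The plan is to transport the problem to the additive group via the logarithmic derivative and then invoke Theorem~\ref{dag1}. Define $\ell\colon\mathbb{G}_m(F)\to\mathbb{G}_a(F)$ by $\ell(a)=\frac{\partial a}{a}$. The Leibniz rule gives $\ell(ab)=\ell(a)+\ell(b)$, so $\ell$ is a homomorphism of differential algebraic groups; its kernel is exactly $\mathbb{G}_m(F^\partial)$, the group of nonzero $\partial$-constants, and it is surjective because $F$ is differentially closed, so that the linear equation $\partial a=ca$ has a nonzero solution $a\in F$ for every $c\in F$. Thus $\ell$ realizes an exact sequence $1\to\mathbb{G}_m(F^\partial)\to\mathbb{G}_m(F)\xrightarrow{\ell}\mathbb{G}_a(F)\to 0$, and the classification of subgroups of $\mathbb{G}_m(F)$ will be read off from this sequence together with Theorem~\ref{dag1}.

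First I would dispose of the elementary alternative using the Zariski closure $\overline{A}$, which is an algebraic subgroup of $\mathbb{G}_m(F)$ and hence equals either $\mu_n$ (for some $n$) or all of $\mathbb{G}_m(F)$, since $F$ is algebraically closed. If $\overline{A}=\mu_n$, then $A$ is a subgroup of the finite group $\mu_n$ and we land in the first alternative, $A=\mu_d$ for some $d\mid n$. So assume $A$ is Zariski dense. The image $\ell(A)$ is again a differential algebraic subgroup of $\mathbb{G}_a(F)$ (the image of a homomorphism of differential algebraic groups is one), so by Theorem~\ref{dag1} it is either all of $\mathbb{G}_a(F)$ or of the form $\{b\mid\mathcal{D}b=0\}$ for a unique nonzero monic $\mathcal{D}\in F[\partial]$. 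The point of the whole argument is that, in the dense case, $A$ must contain the entire kernel $\mathbb{G}_m(F^\partial)$; granting this, $A=\ell^{-1}(\ell(A))=\{a\mid\mathcal{D}(\tfrac{\partial a}{a})=0\}$, where $\ell(A)\neq\mathbb{G}_a(F)$ because otherwise $A=\mathbb{G}_m(F)$ would fail to be proper. This is the second alternative, and uniqueness of $\mathcal{D}$ is inherited from the uniqueness in Theorem~\ref{dag1}.

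The hard part is precisely the claim that a Zariski-dense differential algebraic subgroup $A$ contains all the $\partial$-constants, equivalently that $A$ cannot meet $\mathbb{G}_m(F^\partial)$ in only finitely many points while containing a nonconstant element. Suppose instead $C:=A\cap\mathbb{G}_m(F^\partial)$ is finite; as a Zariski-closed subgroup of $\mathbb{G}_m(F^\partial)$ it is some $\mu_n$. Passing to the still Zariski-dense subgroup $A^n=\{a^n\mid a\in A\}$ kills this torsion: if $a^n\in\mathbb{G}_m(F^\partial)$ then $n\,\ell(a)=0$, forcing $\ell(a)=0$ and hence $a\in\mu_n$, so $a^n=1$; thus $A^n\cap\mathbb{G}_m(F^\partial)=\{1\}$ and $\ell$ restricts to an isomorphism of differential algebraic groups $A^n\xrightarrow{\sim}\ell(A^n)=:B$. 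Its inverse is then a homomorphism $s\colon B\to\mathbb{G}_m(F)$ of differential algebraic groups with $\ell\circ s=\mathrm{id}_B$ and $B\neq\{0\}$ (as $A^n$ is infinite). I would derive a contradiction by showing that no such differential-algebraic section of $\ell$ exists over a nonzero subgroup of $\mathbb{G}_a(F)$: such a section would be given by a differential-rational expression $u=s(b)$ satisfying $\partial u=b\,u$, and a direct computation in the differential function field generated by $b$ shows that $\partial u=b\,u$ admits no nonzero differential-rational solution (comparing highest derivatives forces $u$ to be free of the derivatives of $b$, whereupon the chain rule forces $u=0$). This contradiction completes the dense case, and hence the proof.
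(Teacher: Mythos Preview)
The paper does not prove this theorem at all; it is quoted from Cassidy \cite[Prop.~31 and its Corollary]{cassidy:1972}, so there is no in-paper argument to compare against. Your overall strategy --- transporting the question to $\mathbb{G}_a$ via the logarithmic derivative and invoking Theorem~\ref{dag1} --- is exactly Cassidy's, and the reduction to the claim ``a Zariski-dense differential algebraic subgroup of $\mathbb{G}_m(F)$ contains $\mathbb{G}_m(F^\partial)$'' is correct.

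There is, however, a genuine gap in the final contradiction. Two points need work. First, you assert that the inverse of the bijective morphism $\ell|_{A^n}\colon A^n\to B$ is again a morphism of differential algebraic groups; this is true over a differentially closed field but is itself a nontrivial fact requiring justification or citation. Second, and more seriously, the ``compare highest derivatives'' step only works as stated when $B=\mathbb{G}_a(F)$, so that $b$ is a free differential indeterminate. In your setup nothing prevents $B$ from being a proper subgroup $\ker\mathcal{D}$ with $\mathcal{D}$ of order $m$; then the generic $b\in B$ satisfies $\mathcal{D}b=0$, so $b^{(m)}$ is a linear combination of $b,\dots,b^{(m-1)}$, and a putative $u$ depending on $b^{(m-1)}$ is not eliminated by your order argument. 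A clean fix is to bypass the section altogether and argue on coordinate rings: any morphism $B\to\mathbb{G}_m$ sends the coordinate $y$ to a unit of $F\{z\}/[\mathcal{D}z]\cong F[z,\dots,z^{(m-1)}]$, hence to an element of $F^\times$, so every such morphism is constant and cannot split $\ell$ over a nonzero $B$. With that repair the argument goes through.
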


We conclude this section by recalling the following classical result in the Picard-Vessiot theory. This result was originally proved by Ostrowski for fields of ``functions'', and generalized by Kolchin in \cite{kolchin:1968}.

\begin{thm}[(Kolchin-Ostrowski {\cite{kolchin:1968}})] \label{kolostro}Suppose that $E\subset \tilde{E}$ is a $\delta$-field extension such that $E^{\delta}=\tilde{E}^{\delta},$ and let $\smash{\{\mathfrak{f}_j\}_{j=0}^n}$ be a subset of $\tilde{E}$ such that $\smash{\delta\mathfrak{f}_j\in F}$ for each $j$. Then, there exists a nonzero polynomial $\Phi\in E[Y_0,\dots,Y_n]$ such that $\Phi(\mathfrak{f}_0,\dots,\mathfrak{f}_n)=0$ if and only if there exist elements $c_j\in E^\delta,$ not all zero, such that $\smash{\sum_{j=0}^nc_j\mathfrak{f}_j\in E}.$\end{thm}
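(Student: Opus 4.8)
The plan is to treat the two implications of the biconditional separately. The implication ``a nontrivial $E^\delta$-linear relation $\Rightarrow$ algebraic dependence'' is immediate: if $\sum_{j=0}^{n}c_j\mathfrak f_j=g$ with $g\in E$ and $c_j\in E^\delta$ not all zero, then $\Phi:=\sum_j c_jY_j-g$ is a nonzero polynomial in $E[Y_0,\dots,Y_n]$ (since $E^\delta\subseteq E$) vanishing at $(\mathfrak f_0,\dots,\mathfrak f_n)$. All the work is in the converse: algebraic dependence of the $\mathfrak f_j$ over $E$ (where each $\delta\mathfrak f_j\in E$) must produce a nontrivial $E^\delta$-linear combination lying in $E$.

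For the converse I would first isolate an \emph{auxiliary fact}, proved by a minimal-polynomial computation: if $B$ is a $\delta$-field with $E\subseteq B\subseteq\tilde E$ and $B^\delta=\tilde E^\delta$, and $h\in\tilde E$ is algebraic over $B$ with $\delta h\in B$, then $h\in B$. Indeed, writing the monic minimal polynomial $h^d+b_{d-1}h^{d-1}+\cdots=0$ of $h$ over $B$ and applying $\delta$ produces a polynomial relation for $h$ of degree $<d$, which by minimality vanishes identically; its coefficient of $h^{d-1}$ gives $\delta(dh+b_{d-1})=0$, so $dh+b_{d-1}\in\tilde E^\delta=B^\delta\subseteq B$ and hence $h\in B$.

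The heart of the matter is then a \emph{Main Lemma}, proved by induction on $m$: if $g_1,\dots,g_m\in\tilde E$ are algebraically independent over $E$ with each $\delta g_i\in E$, then any $h$ algebraic over $E(g_1,\dots,g_m)$ with $\delta h\in E$ has the form $h=\sum_{i=1}^m c_ig_i+e$ with $c_i\in E^\delta$ and $e\in E$. The auxiliary fact (with $B=E(g_1,\dots,g_m)$, whose constants are still $\tilde E^\delta=E^\delta$) already places $h$ in $E'(g_m)$, where $E':=E(g_1,\dots,g_{m-1})$. Writing $h=\phi(g_m)$ for $\phi$ in the rational function field $E'(T)$, and letting $D$ be the derivation extending $\delta|_{E'}$ with $DT=\delta g_m\in E$, transcendence of $g_m$ over $E'$ turns $\delta h\in E$ into $D\phi\in E'$. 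A valuation argument at each monic irreducible $\pi\in E'[T]$ dividing the denominator of $\phi$ then rules out poles: since $\pi$ is monic one has $\deg D\pi<\deg\pi$, so a pole of $\phi$ of order $e\geq1$ would force a pole of $D\phi$ of order $e+1$ unless $D\pi=0$, while $D\pi=0$ would, via the coefficient of $T^{\deg\pi-1}$, place $g_m$ in $E'$. The identical device bounds $\deg_T\phi\leq1$, so $h=a_1g_m+a_0$ with $a_1\in(E')^\delta=E^\delta$ and $a_0\in E'$ satisfying $\delta a_0=\delta h-a_1\delta g_m\in E$; the induction hypothesis applied to $a_0$ finishes the lemma. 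To conclude the theorem I would pick a transcendence basis among the $\mathfrak f_j$ (after reindexing, say $\mathfrak f_0,\dots,\mathfrak f_{m-1}$ with $m\leq n$), so that algebraic dependence makes $\mathfrak f_m$ algebraic over $E(\mathfrak f_0,\dots,\mathfrak f_{m-1})$; the Main Lemma writes $\mathfrak f_m=\sum_{j<m}c_{j+1}\mathfrak f_j+e$, whence $\mathfrak f_m-\sum_{j<m}c_{j+1}\mathfrak f_j\in E$ is the required nontrivial relation over $E^\delta$.

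I expect the single-variable rational-function analysis inside the Main Lemma to be the main obstacle. Both the absence of poles and the degree bound rest on converting a statement of the form ``this expression lies in $E$'' into ``the antiderivative $g_m$ would lie in $E'$,'' and it is precisely here that the no-new-constants hypothesis $\tilde E^\delta=E^\delta$ is indispensable; without it the conclusion fails, as one sees already by adjoining a transcendental $\delta$-constant $g$ and taking $h=g^2$.
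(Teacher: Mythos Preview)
The paper does not prove this theorem: it is quoted as a classical result of Kolchin (generalizing Ostrowski) and simply cited, so there is no argument in the paper to compare your proposal against.

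That said, your argument is correct and is essentially the standard proof. The auxiliary fact via the minimal-polynomial trick, the inductive Main Lemma reducing to a single transcendental antiderivative $g_m$, and the rational-function analysis in $E'(T)$ under the extended derivation $D$ with $DT=\delta g_m$ are precisely the classical ingredients. Your pole and degree computations go through as written: for a monic irreducible $\pi$ with $D\pi\neq 0$ one has $\deg D\pi<\deg\pi$, hence $\pi\nmid D\pi$, and a pole of $\phi$ of order $e$ forces a pole of $D\phi$ of order $e+1$; while $D\pi=0$ makes the $T^{\deg\pi-1}$-coefficient give $\delta\bigl((\deg\pi)\,g_m+a_{\deg\pi-1}\bigr)=0$, so the no-new-constants hypothesis places $g_m$ in $E'$. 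The same mechanism caps $\deg_T\phi\le 1$, and the induction closes cleanly (the base case $m=0$ is exactly your auxiliary fact with $B=E$). One cosmetic remark: the paper's statement appears to contain a typo (``$\delta\mathfrak f_j\in F$'' should read ``$\delta\mathfrak f_j\in E$''), which you have silently and correctly interpreted.
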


%%% MAIN RESULT %%%---------------------------

\section{Main Result}\label{section:main}

We set once and for all the notation that we will use for the rest of the paper. Let $F$ be a differentially closed $\partial$-field of characteristic zero, and let $K:=F(x)$ with the structure of $\{\delta,\partial\}=:\Delta$-field defined by setting $\partial x=0$, $\delta x=1$, and $\delta|_{F}=0$. As in \S\ref{prelim}, $\delta$ is the main derivation and $\partial$ is the parametric derivation. Let $p\in K$, and consider the parameterized linear differential equation\vspace{-.1in} \begin{equation} \label{gen-eq-2} \delta^2Y-p\delta Y=0.\end{equation} Let $M$ be a $\mathrm{PPV}$-extension of $K$ for \eqref{gen-eq-2}, and let $\{1,\eta\}$ denote an $F$-basis for the solution space. Since $\mathrm{Gal}_\Delta(M/K)$ fixes the first basis vector in our chosen basis, we have that\footnote{In this paper, all differential Galois groups will act by linear transformations on the left.}\begin{equation}\label{full-ppv} \mathrm{Gal}_\Delta(M/K)\subseteq \left\{\begin{pmatrix}  1 & b \\ 0 & a  \end{pmatrix} \ \middle| \ a\in F^\times, \ b\in F\right\}.\end{equation} Note that the embedding $\mathrm{Gal}_\Delta(M/K)\hookrightarrow\mathrm{GL}_2(F)$ is given in this case by \[\sigma\mapsto\begin{pmatrix} 1 & b_\sigma \\ 0 & a_\sigma\end{pmatrix},\] where $\sigma(\eta)=a_\sigma\eta+b_\sigma$. Since $0\neq\delta\eta$ satisfies the parameterized first-order equation \begin{equation}\label{gen-eq-1} \delta Y-pY=0, \end{equation}the $\Delta$-subfield $L:=K\langle\delta\eta\rangle_\Delta\subseteq M$ is a $\mathrm{PPV}$-extension of $K$ for \eqref{gen-eq-1}. Since $\sigma(\delta\eta)=a_\sigma\delta\eta$, we have that $\mathrm{Gal}_\Delta(M/L)$ is given by $\{\sigma\in\mathrm{Gal}_\Delta(M/K) \ | \ a_\sigma=1\}$, which implies that \begin{equation}\label{rad-ppv} \mathrm{Gal}_\Delta(M/L)\subseteq \left\{\begin{pmatrix}  1 & b \\ 0 & 1  \end{pmatrix} \ \middle|  \ b\in F\right\}\simeq\mathbb{G}_a(F).\end{equation} From now on, we will identify $\mathrm{Gal}_\Delta(M/K)$ and $\mathrm{Gal}_\Delta(M/L)$ with their images in $\mathrm{GL}_2(F)$, as in \eqref{full-ppv} and \eqref{rad-ppv}.

\begin{lem}\label{finite-type} With notation as above, the $\Delta$-field $L:=K\langle\delta\eta\rangle_\Delta$ is finitely generated over $K$.
\end{lem}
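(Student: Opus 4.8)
The plan is to show that, although $L=K\langle\delta\eta\rangle_\Delta$ is \emph{a priori} generated over $K$ by the infinitely many iterated derivatives $\delta^i\partial^j(\delta\eta)$, all of these in fact lie inside a single finitely generated field extension of $K$. Write $w:=\delta\eta$, so that $\delta w=pw$ by~\eqref{gen-eq-1}. Since $\delta w=pw$, every $\delta$-derivative of $w$ is a $K$-multiple of $w$, and more generally $\delta$ maps the field generated by $w$ together with the iterated \emph{parametric} derivatives of $w$ into itself; thus the only possible source of new generators is the parametric derivation $\partial$. Setting $v_1:=\partial w/w$ and $v_{j+1}:=\partial v_j$, a routine induction shows that $\partial^jw=w\cdot P_j(v_1,\dots,v_j)$ for a polynomial $P_j$ with integer coefficients, and therefore $L=K(w,v_1,v_2,\dots)$. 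So it suffices to confine all the $v_j$ to one finitely generated extension.

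The key computation is that each $v_j$ is a $\delta$-antiderivative of a rational function in $K$. Differentiating $\delta w=pw$ with respect to $\partial$ and using $\delta\partial=\partial\delta$ gives $\delta v_1=\partial p$, and then inductively $\delta v_{j+1}=\partial\delta v_j=\partial^{j+1}p$, so that
\[\delta v_j=\partial^jp\in K\qquad\text{for every }j\geq 1.\]
The problem thus reduces to understanding the $\delta$-antiderivatives of the rational functions $\partial^jp\in F(x)$.

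Here I would invoke the classical fact that $q\in F(x)$ admits a $\delta$-antiderivative in $F(x)$ precisely when all of its residues vanish, the only obstruction being the simple-pole (logarithmic) part of its partial fraction decomposition. Since $\partial x=0$, we have $\partial(x-\alpha)=-\partial\alpha$, and a direct computation shows that applying $\partial$ to $p$ only differentiates the partial-fraction coefficients and raises pole orders at the \emph{same} finite set of poles $\alpha_1,\dots,\alpha_r$ of $p$; in particular the residues transform by
\[\operatorname{Res}_{x=\alpha_i}\bigl(\partial^jp\bigr)=\partial^j\bigl(\operatorname{Res}_{x=\alpha_i}p\bigr).\]
Consequently, for each $j$ there is some $g_j\in K$ with $\partial^jp=\delta g_j+\sum_{i=1}^r\partial^j(\operatorname{Res}_{x=\alpha_i}p)\cdot\frac{1}{x-\alpha_i}$. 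Choosing, in a $\delta$-field extension of $L$ whose field of $\delta$-constants is still $F$, elements $\lambda_i$ with $\delta\lambda_i=1/(x-\alpha_i)$, and recalling that the residues $\operatorname{Res}_{x=\alpha_i}p$ lie in $F=K^\delta$, we find that $v_j-g_j-\sum_i\partial^j(\operatorname{Res}_{x=\alpha_i}p)\,\lambda_i$ is a $\delta$-constant, hence lies in $F\subseteq K$. Therefore $v_j\in K(\lambda_1,\dots,\lambda_r)$ for every $j$.

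It follows that $L\subseteq K(w,\lambda_1,\dots,\lambda_r)$, which is a finitely generated field extension of $K$; and since any intermediate field of a finitely generated field extension is itself finitely generated over the base, $L$ is finitely generated over $K$, as claimed. The main obstacle is exactly the control of the parametric derivatives $v_j$: one must preclude their generating an ever-growing tower of subfields, and the residue identity above is what forces all of them into the fixed finitely generated field $K(\lambda_1,\dots,\lambda_r)$ attached to the finitely many poles of $p$.
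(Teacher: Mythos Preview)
Your argument is correct, but it proceeds along quite different lines from the paper's proof. The paper argues Galois-theoretically: by a result of Singer, $\mathrm{Gal}_\Delta(L/K)$ is a \emph{proper} differential algebraic subgroup of $\mathbb{G}_m(F)$; Cassidy's classification (Theorem~\ref{dag2}) then supplies a nonzero $\mathcal{D}\in F[\partial]$ with $\mathcal{D}(\partial a_\sigma/a_\sigma)=0$ for all $\sigma$, and the parameterized Galois correspondence forces $\mathcal{D}\bigl(\partial(\delta\eta)/\delta\eta\bigr)\in K$. Thus $\delta\eta$ is $\partial$-algebraic over $K$, and since $\delta(\delta\eta)=p\,\delta\eta$ one has $L=K\langle\delta\eta\rangle_\partial$, which is then finitely generated in the usual way.

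By contrast, you bypass Galois theory entirely and exploit the explicit rational-function structure of $K=F(x)$: the identity $\delta v_j=\partial^jp$ together with the residue computation $\mathrm{Res}_{\alpha_i}(\partial^jp)=\partial^j(\mathrm{Res}_{\alpha_i}p)$ pins every $v_j$ inside the fixed field $K(\lambda_1,\dots,\lambda_r)$ attached to the finitely many poles of $p$, and then the intermediate-field theorem finishes the job. This is more elementary and yields concrete generators for $L$, but it leans hard on $K=F(x)$ with $F$ algebraically closed (so that partial fractions and a finite pole set are available). The paper's route is shorter, fits the surrounding Galois-theoretic narrative, and in principle transports to any base field over which one can show $\mathrm{Gal}_\Delta(L/K)\subsetneq\mathbb{G}_m(F)$; the specific input $K=F(x)$ enters only through Singer's result.
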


\begin{proof} It is enough to show that $\delta\eta\in L$ is not $\partial$-transcendental over $K$. By \cite[Prop. 3.3]{singer:2011}, $\mathrm{Gal}_\Delta(L/K)$ is a proper subgroup of $\mathbb{G}_m(F)$. By Theorem~\ref{dag2}, there exists a non-zero operator $\mathcal{D}\in F[\partial]$ such that \[\mathcal{D}\left(\frac{\partial a_\sigma}{a_\sigma}\right)=0\] for every $\sigma\in\mathrm{Gal}_\Delta(L/K)$, where $\sigma:\delta\eta\mapsto a_\sigma\delta\eta$. We claim that \begin{equation}\label{lemma-diff-alg} \mathcal{D}\left(\frac{\partial(\delta\eta)}{\delta\eta}\right)\in K.\end{equation} By the Galois correspondence \cite[Thm. 3.5]{cassidy-singer:2006}, it suffices to show that every $\sigma\in\mathrm{Gal}_\Delta(L/K)$ fixes this element. To see this, note that\vspace{-.1in} \[ \sigma\left(\mathcal{D}\left( \frac{\partial(\delta\eta)}{\delta\eta}\right)\right) =\mathcal{D}\left(\frac{\partial(\delta\eta)}{\delta\eta}+\frac{\partial a_\sigma}{a_\sigma}\right)= \mathcal{D}\left(\frac{\partial(\delta\eta)}{\delta\eta}\right). \vspace{.05in}\] This concludes the proof of the Lemma.
\end{proof}

\begin{thm}\label{main} Let $p\in K$, and suppose that $\eta$ satisfies $\delta^2\eta=p\delta\eta$ and $\delta\eta\neq 0$. Then, $\eta$ is $\partial$-transcendental over $K$ if and only if the following conditions hold: \begin{enumerate} 
\item \label{condition1} The equation $\delta Y=\partial p$ does not admit a solution in $K$.
\item \label{condition2} The equation $\delta Y+pY=1$ does not admit a solution in $K$.
\end{enumerate} \end{thm}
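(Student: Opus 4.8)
The plan is to recast the $\partial$-transcendence of $\eta$ as a largeness property of the $\mathrm{PPV}$-group and then extract the two equations from the structure theory of $\mathbb{G}_a$ and $\mathbb{G}_m$. Throughout set $U:=\mathrm{Gal}_\Delta(M/L)\subseteq\mathbb{G}_a(F)$ and $T:=\mathrm{Gal}_\Delta(L/K)\subseteq\mathbb{G}_m(F)$, with $\sigma\in\mathrm{Gal}_\Delta(M/K)$ acting by $\eta\mapsto a_\sigma\eta+b_\sigma$ and $\delta\eta\mapsto a_\sigma\delta\eta$. I would introduce the two auxiliary elements $z:=\eta/\delta\eta$ and $w:=\partial(\delta\eta)/\delta\eta$; differentiating $\delta^2\eta=p\delta\eta$ gives $\delta z+pz=1$ and $\delta w=\partial p$, while one computes $\sigma(z)=z+(b_\sigma/a_\sigma)(\delta\eta)^{-1}$ and $\sigma(w)=w+\partial a_\sigma/a_\sigma$. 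These furnish the dictionary between the conditions and the group: condition~\ref{condition1} fails exactly when $w\in K$, equivalently when every $a_\sigma$ is a $\partial$-constant (so $T\subseteq\mathbb{G}_m(F^\partial)$); and condition~\ref{condition2} will fail exactly when $U$ is trivial.

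The first main step is the Galois reduction (the content of Proposition~\ref{equiv-1}): $\eta$ is $\partial$-transcendental over $K$ if and only if $U=\mathbb{G}_a(F)$. If $U=\ker\mathcal{D}$ is proper for some nonzero $\mathcal{D}\in F[\partial]$ (Theorem~\ref{dag1}), then $\mathcal{D}\eta$ is fixed by $U$, hence lies in $M^U=L$; since $L$ is $\partial$-algebraic over $K$ (Lemma~\ref{finite-type}), $\eta$ is $\partial$-algebraic over $K$. Conversely, if $\eta$ is $\partial$-algebraic then it is so over $L$, and because $\delta(\partial^i\eta)=\partial^i(\delta\eta)\in L$ with $L^\delta=M^\delta=F$, the Kolchin--Ostrowski theorem (Theorem~\ref{kolostro}) applied to $\{\partial^i\eta\}$ yields $c_i\in F$, not all zero, with $\sum c_i\partial^i\eta\in L$, so $U$ is proper.

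Next I would translate the conditions. If condition~\ref{condition2} fails then $z\in K$ up to a multiple of $(\delta\eta)^{-1}$, so $\eta=k\,\delta\eta+c$ with $k\in K$, $c\in F$, whence $\eta\in L$ and $U$ is trivial; conversely, if $U$ is trivial then $\eta\in L$, and expanding $\eta$ as a polynomial in $\delta\eta$ over $K\langle w\rangle_\partial$ exhibits a solution of $\delta Y+pY=1$ in $K\langle w\rangle_\partial$, which is unique there (the homogeneous equation has no nonzero solution off the line $F\cdot(\delta\eta)^{-1}$), hence fixed by $\mathrm{Gal}_\Delta(M/K)$ and so an element of $K$, making condition~\ref{condition2} fail. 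With this, the implication ``conditions~\ref{condition1} and~\ref{condition2}~$\Rightarrow$~$\partial$-transcendence'' is clean: write $U=\ker\mathcal{D}$ with $\mathcal{D}$ minimal monic of order $\rho$. Condition~\ref{condition2} excludes $\rho=0$ (trivial $U$); and if $\rho\geq1$ the solution space of $\mathcal{D}$ is a nonzero finite-dimensional vector space over $C:=F^\partial$ which, by normality of $U$ and the fact that conjugation scales $\mathbb{G}_a$ by $a_\sigma^{-1}$, is stable under multiplication by each $a_\sigma$; a nonzero vector then forces each $a_\sigma$ to be algebraic over $C$, hence (characteristic $0$) $a_\sigma\in C$, i.e.\ condition~\ref{condition1} fails. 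So both conditions force $U=\mathbb{G}_a(F)$.

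The remaining and hardest direction is that condition~\ref{condition1} failing already implies $\eta$ is $\partial$-algebraic. Here $w\in K$, and the key observation is that $\delta w=\partial p$ says precisely that $D_1:=\delta+p$ and $D_2:=\partial+w$ commute as operators on $K$. Writing $P_i:=D_2^i(1)\in K$, so that $\partial^i(\delta\eta)=P_i\,\delta\eta$, the criterion $\mathcal{D}\eta\in L$ is equivalent to $\sum c_iP_i\in D_1(K)$; thus $\eta$ is $\partial$-algebraic if and only if the images $\overline{P_i}=\overline{D_2}^{\,i}(\overline1)$ are $F$-linearly dependent in the cokernel $K/D_1(K)$, where $\overline{D_2}$ is the operator induced by $D_2$ (which descends precisely because $D_1$ and $D_2$ commute). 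I would establish this dependence by exploiting the single-variable structure of $K=F(x)$: localizing at the finitely many poles of $p$ and $w$ and filtering by pole order and degree, the action of $D_1=\partial_x+p$ on the associated graded is surjective in all sufficiently high degrees, which confines the $\overline{D_2}$-orbit of $\overline1$ to a finite-dimensional $F$-subspace of $K/D_1(K)$ and forces the $\overline{P_i}$ to become dependent. Producing this annihilating operator cleanly from the pole data of $p$ is the main obstacle; granting it, Theorem~\ref{main} follows by assembling the equivalences $\eta$ is $\partial$-transcendental $\iff U=\mathbb{G}_a(F)\iff$ conditions~\ref{condition1} and~\ref{condition2} both hold.
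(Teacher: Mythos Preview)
Your architecture matches the paper's exactly: the paper also reduces $\partial$-transcendence of $\eta$ over $K$ (equivalently over $L$, by Lemma~\ref{finite-type}) to $U:=\mathrm{Gal}_\Delta(M/L)=\mathbb{G}_a(F)$ via Kolchin--Ostrowski (this is Proposition~\ref{equiv-1}), and then shows $U=\mathbb{G}_a(F)\iff$ conditions~\eqref{condition1} and~\eqref{condition2} (Proposition~\ref{equiv-2}). Your dictionary via $w=\partial(\delta\eta)/\delta\eta$ and $z=\eta/\delta\eta$ is precisely the content of Lemmas~\ref{const-diag} and~\ref{trivial-radical}. Your sufficiency argument (if $U=\ker\mathcal{D}$ is nontrivial proper, the $F^\partial$-space $\ker\mathcal{D}$ is stable under multiplication by $a_\sigma^{-1}$, forcing $a_\sigma$ algebraic over $F^\partial$, hence in $F^\partial$) is exactly what the paper imports as \cite[Lem.~3.6(2)]{hardouin-singer:2008}; you have reproved it cleanly, which is a genuine gain in self-containment. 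One minor point: your ``$U=0\Rightarrow$ condition~\eqref{condition2} fails'' is too vague. The paper simply observes that when $M=L$ the element $\eta/\delta\eta$ is fixed by $\mathrm{Gal}_\Delta(M/K)$ (after adjusting $\eta$ by a constant in $F$ if necessary), hence lies in $K$ and solves $\delta Y+pY=1$; this is shorter than ``expanding $\eta$ as a polynomial in $\delta\eta$.''

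The substantive divergence is the direction ``condition~\eqref{condition1} fails $\Rightarrow U$ proper.'' The paper does \emph{not} argue directly: it invokes \cite{singer:2011} twice. If $T\subseteq\mathbb{G}_m(F^\partial)$ is finite, \cite[Lem.~3.2]{singer:2011} forces $U$ proper; if $T=\mathbb{G}_m(F^\partial)$, the paper quotes \cite[pp.~159--160]{singer:2011} that the full group $\{(\begin{smallmatrix}1&b\\0&a\end{smallmatrix}):a\in F^\partial\setminus\{0\},\,b\in F\}$ is never a $\mathrm{PPV}$-group over $F(x)$, a nontrivial consequence of \cite[Thm.~1.1]{singer:2011}. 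Your cokernel route is genuinely different and more hands-on: the observation that integrability $\delta w=\partial p$ makes $D_1=\delta+p$ and $D_2=\partial+w$ commute, so that $\overline{D_2}$ acts on $K/D_1(K)$ and it suffices to show the orbit of $\overline{1}$ is $F$-linearly dependent, is a nice reformulation. But the step you flag as ``the main obstacle'' really is one: you need that the $\overline{P_i}$ lie in a finite-dimensional $F$-subspace, i.e.\ that $K_S/D_1(K_S)$ is finite-dimensional for $S$ the joint pole set of $p,w$. This is provable by the pole-order filtration you sketch (at each finite $\alpha$ the leading symbol of $D_1$ shifts order by $\max(1,\mathrm{ord}_\alpha(p))$, and similarly at $\infty$), but it must be carried out; in effect you would be reproving the relevant special case of Singer's theorem. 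So your plan is viable and arguably more elementary than the paper's black-box citations, but as written it has a genuine gap at exactly the point where the paper leans on \cite{singer:2011}.
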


\begin{proof}This is a consequence of Propositions~\ref{equiv-1} and \ref{equiv-2}, which will be proved in \S\ref{section:proofs}. Proposition~\ref{equiv-1} states that $\eta$ is $\partial$-transcendental over $L$ if and only if $\mathrm{Gal}_\Delta(M/L)=\mathbb{G}_a(F)$, whereas Proposition~\ref{equiv-2} states that $\mathrm{Gal}_\Delta(M/L)=\mathbb{G}_a(F)$ if and only if conditions~\eqref{condition1} and \eqref{condition2} hold. The $\partial$-transcendence of $\eta$ over $L$ implies that $\eta$ is also $\partial$-transcendental over $K$. On the other hand, Lemma~\ref{finite-type} implies that if $\eta$ is $\partial$-transcendental over $K$, then it must also be $\partial$-transcendental over $L$. \end{proof}

The following corollary shows that Theorem~\ref{main} can be used to establish differential transcendence over $\Delta$-fields whose field of $\delta$-constants is not necessarily $\partial$-closed.

\begin{cor} \label{original-cor}Let $S:=R(x)$ be a $\Delta:=\{\delta,\partial\}$ field with $\delta x=1$, $\partial x=0$, and $S^\delta=R$, and let $\bar{R}$ denote an algebraic closure of $R$. Let $S\subset T$ be a $\Delta$-field extension, and suppose that $\eta\in T$ satisfies $\delta^2\eta=p\delta\eta$ and $\delta\eta\neq 0$. If none of the equations $\delta Y=\partial p$ and $\delta Y+pY=1$ admits a solution in $\bar{R}(x)$, then $\eta$ is $\partial$-transcendental over $S$. \end{cor}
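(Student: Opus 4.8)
The plan is to reduce the statement, whose ground field $S=R(x)$ has $\delta$-constants $R$ that need not be $\partial$-closed, to an application of Theorem~\ref{main} over an auxiliary field with $\partial$-closed constants, and then to descend the conclusion back to $S$. First I would enlarge the constants: since $R=S^\delta$ is a $\partial$-field, let $F\supseteq R$ be a $\partial$-closure, so that $F$ is $\partial$-closed and hence algebraically closed, and I may take $\bar R\subseteq F$. Set $K:=F(x)$ with $\delta x=1$, $\partial x=0$, $\delta|_F=0$, so $K^\delta=F$ is $\partial$-closed and $S\subseteq\bar R(x)\subseteq K$. Because $\bar R(x)/R(x)$ is algebraic and algebraic independence is insensitive to algebraic extensions of the base field, $\eta$ is $\partial$-transcendental over $S$ if and only if it is $\partial$-transcendental over $\bar R(x)$; this is the statement I would actually prove.

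The second step is to descend the two solvability conditions between $\bar R(x)$ and $K$. I claim that for each of $\delta Y=\partial p$ and $\delta Y+pY=1$, admitting a solution in $\bar R(x)$ is equivalent to admitting one in $K=F(x)$. For $\delta Y=\partial p$ this is immediate: a rational function has a rational $\delta$-primitive exactly when all its residues vanish, and since $\partial p\in R(x)$ already has all its poles in the algebraically closed field $\bar R$, its residues lie in $\bar R$ and are unaffected by passing to $F$. For $\delta Y+pY=1$ one observes that a rational solution can only have poles at the finitely many $\bar R$-rational poles of $p$, with a priori bounded orders and bounded degree at infinity; hence all solutions lie in a fixed finite-dimensional $\bar R$-space of rational functions, cut out there by an inhomogeneous system of $\bar R$-linear equations, and such a system is solvable over the extension $F$ if and only if it is solvable over $\bar R$. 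In particular, the hypothesis guarantees that neither equation is solvable in $K$.

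With these in hand I would invoke Theorem~\ref{main}: over $K$ both conditions hold, so the solution $\tilde\eta$ of $\delta^2Y-p\delta Y=0$ in a $\mathrm{PPV}$-extension $M\supseteq K$ is $\partial$-transcendental over $K$, and in fact $\mathrm{Gal}_\Delta(M/L)=\mathbb{G}_a(F)$ for $L=K\langle\delta\tilde\eta\rangle_\Delta$ by Propositions~\ref{equiv-1} and~\ref{equiv-2}. It remains to transfer this to the given $\eta$, and this is the hard part. The obstruction is that $\eta$ lives in $T$ over the smaller field $S$, and adjoining it to $K$ may enlarge the field of $\delta$-constants beyond $F$; thus $K\langle\eta\rangle_\Delta$ need not be a $\mathrm{PPV}$-extension and Theorem~\ref{main} does not apply to $\eta$ verbatim. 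One cannot circumvent this by proving $\partial$-transcendence of $\eta$ over $K$, which is strictly stronger than what we want and may genuinely fail: amalgamating $M$ and $T$ over $S$ into a $\Delta$-field $N$ (a $\Delta$-fraction field of $(M\otimes_S T)/\mathfrak m$ for a maximal $\Delta$-ideal $\mathfrak m$), any two solutions are related by $\tilde\eta=a\eta+b$ with $a\in N^\delta\setminus\{0\}$ and $b\in N^\delta$, and these relating constants may be $\partial$-transcendental over $F$.

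I expect to resolve the transfer by working at the level of the parameterized Galois group rather than with a single solution, and I would spend the most care here. The relevant object is the unipotent $\mathbb{G}_a$-part governing the passage from $\delta\eta$ to $\eta$; by Theorem~\ref{dag2} and the computation in Lemma~\ref{finite-type} its defining operator is visibly defined over $\bar R(x)$, and the equivalence of the previous paragraph shows that this operator is trivial, i.e.\ the group is all of $\mathbb{G}_a$, already over $\bar R$. I would then retrace the proofs of Propositions~\ref{equiv-1} and~\ref{equiv-2} with $F$ replaced by $\bar R$ throughout: every step that detects the size of this group uses only data in $\bar R(x)$, and the one point where algebraic relations must be converted into an $\bar R$-linear combination lying in $\bar R(x)$ is furnished by the Kolchin–Ostrowski theorem (Theorem~\ref{kolostro}). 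The output is that $\eta$ is $\partial$-transcendental over $\bar R(x)$, which by the first step is exactly the desired conclusion. The main obstacle, then, is this final descent of the Galois-group computation to the non-$\partial$-closed field $\bar R$, since all the machinery of \cite{cassidy-singer:2006} invoked in Theorem~\ref{main} is set up under the standing assumption that the $\delta$-constants are $\partial$-closed.
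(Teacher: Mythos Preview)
Your overall reduction strategy --- enlarge the $\delta$-constants to a $\partial$-closed field, descend the two solvability conditions, and invoke Theorem~\ref{main} --- is exactly the paper's. Your descent argument for the two conditions is also fine (the paper phrases it as a single observation: the existence of a rational solution is governed by an algebraic system over $R$, so no $\bar R$-point implies no point over any extension).

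The gap is precisely the one you flag at the end, and the paper resolves it by a different choice at the very first step. You take $F$ to be a $\partial$-closure of $R$; the paper instead takes $R'$ to be a $\partial$-closure of $T^\delta$. Since $R\subseteq T^\delta$, one still has $S\subseteq S':=R'(x)$ and $\bar R\hookrightarrow R'$, so the descent of conditions goes through unchanged. The payoff is that now $T':=\mathrm{Frac}(T\otimes_{T^\delta}R')$ is a $\Delta$-field extension of $S'$ containing $\eta$ with $(T')^\delta=R'$ (this is the content of the cited \cite[\S8.2]{gill-gor-ov:2012}). Hence $S'\langle\eta\rangle_\Delta\subseteq T'$ has no new $\delta$-constants and is itself a $\mathrm{PPV}$-extension of $S'$, so Theorem~\ref{main} applies \emph{directly to the given $\eta$} over $K=S'$. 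No amalgamation of $M$ and $T$, no comparison of $\eta$ with an abstract $\tilde\eta$, and no retracing of Propositions~\ref{equiv-1} and~\ref{equiv-2} over a non-$\partial$-closed base is needed.

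Your proposed workaround --- redoing the Galois-group computation with $\bar R$ in place of $F$ --- is not just laborious but genuinely unjustified in this framework: the proof of Proposition~\ref{equiv-2} invokes \cite[Thm.~1.1 and Lem.~3.2]{singer:2011} and \cite[Lem.~3.6(2)]{hardouin-singer:2008}, all of which sit inside the $\partial$-closed-constants setting, and the very existence and uniqueness of the $\mathrm{PPV}$-extension you would need over $\bar R(x)$ is outside the scope of \cite{cassidy-singer:2006}. Taking the $\partial$-closure of $T^\delta$ rather than of $R$ sidesteps all of this in one line.
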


\begin{proof} 
Let $R'$ denote a $\partial$-differential closure\footnote{See \cite{kolchin:1974}, where the term \emph{constrained closure} is used instead.} of $T^\delta$. Then $T\otimes_{T^\delta}R'$ (resp. $S\otimes_RR'$) is a domain \cite[\S8.2]{gill-gor-ov:2012}, and we denote by $T'$ (resp. $S'$) its field of fractions. Since the embedding $S\hookrightarrow T'$ factors through $S\hookrightarrow S'\hookrightarrow T'$, it suffices to show that $\eta\in T'$ is $\partial$-transcendental over $S'$. By Theorem~\ref{main} applied to $K=S'$, it is enough to show that none of the equations $\delta Y=\partial p$ and $\delta Y+pY=1$ admits a solution in $S'$. 

The natural map $S\hookrightarrow S'$ induces an embedding $R\hookrightarrow R'$, which may be extended to an $R$-embedding $\bar{R}\hookrightarrow R'$ because $R'$ is algebraically closed. By assumption, the equations $\delta Y=\partial p$ and $\delta Y+pY=1$ do not admit solutions in $\bar{R}(x)$, and therefore they do not admit solutions in $S'=R'(x)$, either. This follows from the explicit methods presented in \cite{eremenko:1998} or in \cite[\S3]{singer:1981} for the construction of rational solutions to (first-order) linear differential equations with coefficients in a field of rational functions: when such rational solutions exist, one can write down a system of algebraic equations over $R$ in the unknown coefficients of the sought-for rational function. If the $R$-variety defined by this system of equations does not have an $\bar{R}$-point, it cannot have an $R'$-point, either. \end{proof}

We conclude this section by deducing Theorem~\ref{original} from Corollary~\ref{original-cor}.

\begin{proof}[Proof of {Thm.~\ref{original}}] We apply Corollary~\ref{original-cor} with $R:=\mathbb{C}(t)$, $S:=R(x)=\mathbb{C}(t,x)$, $\Delta:=\{\frac{\partial}{\partial x},\frac{\partial}{\partial t}\}$ and $\eta:=\gamma(t,x)$. By Corollary~\ref{original-cor}, the $\frac{\partial}{\partial t}$-transcendence of $\gamma(t,x)$ over $S$ will follow from the nonexistence of solutions in $\overline{\mathbb{C}(t)}(x)$ to any of the equations $\frac{\partial Y}{\partial x} =  \frac{1}{x}$ and $\frac{\partial Y}{\partial x}+\frac{t-1-x}{x}Y=1$, where $\overline{\mathbb{C}(t)}$ denotes the algebraic closure of $\mathbb{C}(t)$. It is clear that the first equation does not admit rational solutions (as all of its solutions are of the form $\log (x) + c$ for some $c\in\overline{\mathbb{C}(t)}$). We proceed by contradiction: suppose there exists $r\in\overline{\mathbb{C}(t)}(x)$ such that \begin{equation}\label{gamma-contra}\frac{\partial r}{\partial x}+\frac{t-1-x}{x}r=1.\end{equation} First note that $\partial r/\partial x\neq 0$, whence $r$ must have a pole somewhere on $\smash{\mathbb{P}^1(\overline{\mathbb{C}(t)})}$. But $r$ can only have poles at $\{0,\infty\}$, because otherwise the left-hand side of \eqref{gamma-contra} will have poles. If $r$ had a pole at $0$, the residue of $\frac{t-1-x}{x}$ at $0$ would have to be an integer, which is clearly false. Hence, $r$ can only have a pole at $\infty$, that is, $r$ must be a polynomial in $x$. Moreover, $r$ must be divisible by $x$, since otherwise the left-hand side of \eqref{gamma-contra} would have a pole at $0$. But then the degree of the polynomial on the left-hand side of \eqref{gamma-contra} is equal to the degree of $r$, which is at least $1$. This contradiction concludes the proof.\end{proof}

%%% PROOFS %%%------------------

\section{Proofs}\label{section:proofs}

We keep the same notation as in \S\ref{section:main}: $K=F(x)$, $F$ is $\partial$-closed, $L$ (resp. $M$) is a $\mathrm{PPV}$ extension of $K$ for \eqref{gen-eq-1} (resp. \eqref{gen-eq-2}), and $\eta\in M$ satisfies $\delta^2\eta=p\delta\eta$ and $\delta\eta\neq 0$. We begin by showing that $\eta$ is $\partial$-transcendental over $L$ if and only if $\mathrm{Gal}_\Delta(M/L)$ is as large as possible. 

\begin{prop}\label{equiv-1}We have that $\mathrm{Gal}_\Delta(M/L)=\mathbb{G}_a(F)$ if and only if $\eta$ is $\partial$-transcendental over $L$. \end{prop}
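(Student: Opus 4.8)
The plan is to funnel both implications through a single intermediate assertion — namely, that there exists a nonzero operator $\mathcal{L}\in F[\partial]$ with $\mathcal{L}(\eta)\in L$ — and to prove the two resulting equivalences using, respectively, the Kolchin--Ostrowski theorem (Theorem~\ref{kolostro}) and Cassidy's classification of differential algebraic subgroups of $\mathbb{G}_a$ (Theorem~\ref{dag1}), together with the parameterized Galois correspondence \cite[Thm.~3.5]{cassidy-singer:2006}.

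First I would record the structural facts needed to invoke Kolchin--Ostrowski. Since $\delta\eta\in L$ and $M=K\langle\eta\rangle_\Delta=L\langle\eta\rangle_\Delta$, each parametric derivative satisfies $\delta(\partial^j\eta)=\partial^j(\delta\eta)\in L$, so the elements $\partial^j\eta$ are $\delta$-primitives over $L$. I also need that no new $\delta$-constants appear: from $F\subseteq L\subseteq M$ and the PPV condition $M^\delta=K^\delta=F$, together with $\delta|_F=0$, I obtain $L^\delta=M^\delta=F$. This places us exactly in the hypotheses of Theorem~\ref{kolostro} with $E=L$, $\tilde{E}=M$, and $\mathfrak{f}_j=\partial^j\eta$. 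Applying that theorem, $\eta$ is $\partial$-algebraic over $L$ (i.e.\ $\eta,\partial\eta,\dots,\partial^n\eta$ are algebraically dependent over $L$ for some $n$) if and only if there exist $c_j\in L^\delta=F$, not all zero, with $\sum_j c_j\partial^j\eta\in L$ — equivalently, there is a nonzero $\mathcal{L}=\sum_j c_j\partial^j\in F[\partial]$ with $\mathcal{L}(\eta)\in L$.

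It then remains to show that $\mathrm{Gal}_\Delta(M/L)=\mathbb{G}_a(F)$ if and only if no such $\mathcal{L}$ exists, which I would establish by a fixed-point computation exploiting that every $\sigma\in G:=\mathrm{Gal}_\Delta(M/L)$ is a $\Delta$-automorphism (so commutes with $\mathcal{L}\in F[\partial]$) and acts by $\sigma(\eta)=\eta+b_\sigma$ as in \eqref{rad-ppv}. If $G$ is proper, Theorem~\ref{dag1} yields a nonzero monic $\mathcal{D}\in F[\partial]$ with $\mathcal{D}(b_\sigma)=0$ for all $\sigma\in G$; then $\sigma(\mathcal{D}\eta)=\mathcal{D}(\eta+b_\sigma)=\mathcal{D}\eta+\mathcal{D}(b_\sigma)=\mathcal{D}\eta$, so $\mathcal{D}\eta$ is fixed by $G$ and hence lies in $L$ by the Galois correspondence, giving the desired $\mathcal{L}=\mathcal{D}$. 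Conversely, if $\mathcal{L}(\eta)\in L$ for some nonzero $\mathcal{L}$, then for every $\sigma\in G$ we have $0=\sigma(\mathcal{L}\eta)-\mathcal{L}\eta=\mathcal{L}(b_\sigma)$, so $G$ is contained in $\{b\in\mathbb{G}_a(F)\mid\mathcal{L}(b)=0\}$; this is a proper subgroup, since its solution set is finite-dimensional over $F^\partial$ whereas $F$ is infinite-dimensional over $F^\partial$ (as $F$ is $\partial$-closed), and therefore $G\neq\mathbb{G}_a(F)$. Chaining the two equivalences yields the proposition: $\eta$ is $\partial$-transcendental over $L$ exactly when no annihilating $\mathcal{L}$ exists, exactly when $G=\mathbb{G}_a(F)$.

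I expect the only genuinely delicate point to be the verification that $L^\delta=F$, which is what licenses the use of Theorem~\ref{kolostro}; once the correct intermediate statement has been isolated, the remaining ingredients — the commuting of $\mathcal{L}$ with Galois automorphisms, the two fixed-point arguments, and the properness of $\ker\mathcal{L}$ inside $\mathbb{G}_a(F)$ — are routine. The conceptual heart of the argument is recognizing that the parametric derivatives of $\eta$ are $\delta$-primitives over $L$, which is precisely the configuration Kolchin--Ostrowski is designed to handle.
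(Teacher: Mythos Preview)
Your proposal is correct and follows essentially the same approach as the paper: both arguments use Kolchin--Ostrowski (applied to the $\delta$-primitives $\partial^j\eta$ over $L$) in one direction and Cassidy's classification of differential algebraic subgroups of $\mathbb{G}_a$ together with the parameterized Galois correspondence in the other, via the same fixed-point computation $\sigma(\mathcal{L}\eta)=\mathcal{L}\eta+\mathcal{L}(b_\sigma)$. Your packaging through the intermediate statement ``there exists nonzero $\mathcal{L}\in F[\partial]$ with $\mathcal{L}(\eta)\in L$'' is a cosmetic reorganization, and your explicit checks that $L^\delta=F$ and that $\ker\mathcal{L}$ is proper in $\mathbb{G}_a(F)$ fill in details the paper leaves implicit.
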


\begin{proof}For the first implication, suppose that there exists a polynomial $\Phi\in L[Y_0, Y_1,\dots,Y_n]$, not identically zero, such that $\Phi(\eta,\partial\eta,\dots,\partial^n\eta)=0$. Since \[\delta(\partial^j\eta)=\partial^j(\delta\eta)\in L,\] we apply Theorem~\ref{kolostro} with $E:=L$, $\tilde{E}:=M$, and $\mathfrak{f}_j:=\partial^{j}\eta$ to conclude that there exist $c_0,\dots,c_n\in F$, not all zero, such that $\sum_{j=0}^nc_j\partial^j\eta\in L$. Since \begin{equation*}\sigma\left(\sum_{j=0}^nc_j\partial^j\eta\right)=\sum_{j=0}^nc_j\partial^j(\eta+b_\sigma)=\sum_{j=0}^nc_j\partial^j\eta + \sum_{j=0}^nc_j\partial^jb_\sigma\end{equation*} for every $\sigma\in\mathrm{Gal}_\Delta(M/L)$, we have that $\sum_{j=0}^nc_j\partial^jb_\sigma=0$ for all $\sigma$, which implies that $\mathrm{Gal}_\Delta(M/L)$ is a proper subgroup of $\mathbb{G}_a(F)$.

For the opposite implication\footnote{This implication is proved in \cite[Example 7.2]{cassidy-singer:2006}; we follow their proof.}, assume that $\mathrm{Gal}_\Delta(M/L)$ is a proper subgroup of $\mathbb{G}_a(F)$. By Theorem~\ref{dag1}, there exists a non-zero differential operator $\sum_{j=0}^nc_j\partial^j$ such that $\sum_{j=0}^nc_j\partial^jb_\sigma=0$ for every $\sigma\in\mathrm{Gal}_\Delta(M/L)$. Thus, we have that \vspace{-.125in}\[\sigma\bigl(\sum_{j=0}^nc_j\partial^j\eta\bigr)=\sum_{j=0}^nc_j\partial^j\eta,\] for all $\sigma\in\mathrm{Gal}_\Delta(M/L)$. By the parameterized Galois correspondence \cite[Thm. 3.5]{cassidy-singer:2006}, we have that $\sum_{j=0}^nc_j\partial^j\eta\in L$. Hence, $\eta$ is not $\partial$-transcendental over $L$.
\end{proof}

The following two Lemmas relate conditions \eqref{condition1} and \eqref{condition2} of Theorem~\ref{main} to certain properties of the linear differential algebraic group $\mathrm{Gal}_\Delta(M/K)$.

\begin{lem} \label{const-diag} The equation $\delta Y=\partial p$ admits a solution in $K$ if and only if $\partial a_\sigma=0$ for every $\sigma\in\mathrm{Gal}_\Delta(L/K)$.
\end{lem}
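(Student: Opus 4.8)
The plan is to recast the condition ``$\partial a_\sigma=0$ for every $\sigma$'' as the assertion that a single explicit element of $L$ is fixed by the whole group $\mathrm{Gal}_\Delta(L/K)$, and then to recognize that element as a solution of $\delta Y=\partial p$. Throughout I write $u:=\delta\eta$, which is nonzero by hypothesis and satisfies $\delta u=pu$ by \eqref{gen-eq-1}; recall that $\sigma(u)=a_\sigma u$ with $a_\sigma\in F^\times$ for each $\sigma\in\mathrm{Gal}_\Delta(L/K)$. First I would compute the action of $\sigma$ on the logarithmic $\partial$-derivative $\partial u/u$. Since $\sigma$ commutes with $\partial$, a direct calculation gives $\sigma(\partial u/u)=\partial u/u+\partial a_\sigma/a_\sigma$. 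As $a_\sigma\neq 0$, the hypothesis $\partial a_\sigma=0$ for all $\sigma$ is therefore equivalent to $\partial u/u$ being fixed by every element of $\mathrm{Gal}_\Delta(L/K)$, which by the parameterized Galois correspondence \cite[Thm. 3.5]{cassidy-singer:2006} is in turn equivalent to $\partial u/u\in K$.

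The computational core of the argument is to check that $\partial u/u$ is, irrespective of whether it lies in $K$, a solution in $L$ of the equation $\delta Y=\partial p$. This follows from applying $\partial$ to the identity $\delta u/u=p$ and exploiting $\partial\delta=\delta\partial$: one obtains $\delta(\partial u/u)=\partial(\delta u/u)=\partial p$, the first equality being the general logarithmic-derivative identity $\delta(\partial u/u)=\partial(\delta u/u)$ valid for any nonzero $u$.

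It then remains to close the loop using these two observations. If $\partial a_\sigma=0$ for all $\sigma$, then $\partial u/u\in K$ by the first step and solves $\delta Y=\partial p$ by the second, so the equation has a solution in $K$. Conversely, if $v\in K$ satisfies $\delta v=\partial p$, then $\delta(v-\partial u/u)=0$, so $v-\partial u/u\in L^\delta$; since $L$ is a $\mathrm{PPV}$-extension of $K$ we have $L^\delta=K^\delta\subseteq K$, whence $\partial u/u=v-(v-\partial u/u)\in K$, and the first step returns $\partial a_\sigma=0$ for all $\sigma$. I expect the only subtle point to be this last bookkeeping of $\delta$-constants---namely invoking $L^\delta=K^\delta$ to guarantee that the difference $v-\partial u/u$ lands back in $K$ rather than in some larger constant field; the remaining steps are routine logarithmic-derivative manipulations together with the Galois correspondence.
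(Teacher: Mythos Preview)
Your argument is correct. The paper, however, does not give this direct proof: it simply observes that the existence of $q\in K$ with $\delta q=\partial p$ is exactly the integrability condition of \cite[Def.~3.8]{cassidy-singer:2006} for the first-order system \eqref{gen-eq-1}, and then invokes \cite[Prop.~3.9]{cassidy-singer:2006}, which characterizes complete integrability by the $\mathrm{PPV}$-group being conjugate into the $\partial$-constants. Your route is genuinely different and more elementary: instead of appealing to the general isomonodromy criterion, you exhibit the concrete witness $\partial u/u$ (with $u=\delta\eta$), show it always satisfies $\delta Y=\partial p$ via the logarithmic-derivative identity $\delta(\partial u/u)=\partial(\delta u/u)$, and then use the Galois correspondence together with $L^\delta=K^\delta$ to finish. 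What your approach buys is self-containment and an explicit description of the $K$-solution when it exists; what the paper's citation buys is brevity and a pointer to the broader phenomenon (isomonodromy $\Leftrightarrow$ constant group) of which this lemma is the $1\times 1$ instance.
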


\begin{proof}This is a special case of \cite[Prop. 3.9]{cassidy-singer:2006}, taking into account that the existence of $q\in K$ such that $\delta q=\partial p$ coincides with the integrability conditions \cite[Def. 3.8]{cassidy-singer:2006} for the system \eqref{gen-eq-1}.\end{proof}

\begin{lem} \label{trivial-radical} The equation  $\delta Y+pY=1$ admits a solution in $K$ if and only if $\mathrm{Gal}_\Delta(M/L)=0$.\end{lem}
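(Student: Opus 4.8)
The plan is to build the proof around the single family of elements $q_c := (\eta+c)/\delta\eta \in M$, indexed by $c \in F$, together with the elementary identity
\[
\delta q_c = \frac{(\delta\eta)^2 - (\eta+c)\,\delta^2\eta}{(\delta\eta)^2} = 1 - p\,q_c,
\]
(using $\delta c = 0$ and $\delta^2\eta = p\,\delta\eta$), which shows that \emph{every} $q_c$ already solves $\delta Y + pY = 1$ inside $M$. With this in hand, the lemma reduces to deciding whether one of the $q_c$ can be arranged to lie in $K = M^{\mathrm{Gal}_\Delta(M/K)}$.

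For the forward implication I would start from $q \in K$ with $\delta q + pq = 1$ and compute $\delta(q\,\delta\eta) = (\delta q + pq)\,\delta\eta = \delta\eta$, so that $q\,\delta\eta - \eta$ is annihilated by $\delta$. Since $M^\delta = K^\delta = F$ (the $\delta$-constants of $F(x)$ are exactly $F$), this gives $\eta = q\,\delta\eta - c$ for some $c \in F$, and hence $\eta \in L = K\langle\delta\eta\rangle_\Delta$. As $M = K\langle\eta\rangle_\Delta$, this forces $M = L$, so $\mathrm{Gal}_\Delta(M/L) = 0$.

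For the converse I would exploit that $\mathrm{Gal}_\Delta(M/L) = 0$ makes the homomorphism $\sigma \mapsto a_\sigma$ on $G := \mathrm{Gal}_\Delta(M/K)$ injective, its kernel being $\mathrm{Gal}_\Delta(M/L)$ by \eqref{rad-ppv}. Computing $(\sigma\tau)(\eta) = a_\tau(a_\sigma\eta + b_\sigma) + b_\tau$ (here $\sigma$ fixes $a_\tau, b_\tau \in F \subseteq K$) gives $a_{\sigma\tau} = a_\sigma a_\tau$ and $b_{\sigma\tau} = a_\tau b_\sigma + b_\tau$. By injectivity $b_\sigma$ is a well-defined function $b(a_\sigma)$, and this relation reads $b(aa') = a'b(a) + b(a')$; swapping $a \leftrightarrow a'$ and subtracting yields $(a'-1)b(a) = (a-1)b(a')$, so that $c := b(a)/(a-1)$ is a single constant in $F$ independent of $a \neq 1$ (and $c := 0$ if $G$ is trivial). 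For this $c$, since $b_\sigma + c = c\,a_\sigma$, one checks $\sigma(q_c) = (a_\sigma\eta + b_\sigma + c)/(a_\sigma\,\delta\eta) = q_c$ for all $\sigma \in G$, whence $q_c \in M^G = K$ by the Galois correspondence; and $q_c$ solves $\delta Y + pY = 1$ by the identity above.

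The hard part is the converse, specifically the step extracting the coboundary form $b_\sigma = c(a_\sigma - 1)$ with a \emph{single} constant $c$: this is exactly where both the commutativity of the image in $\mathbb{G}_m(F)$ and the injectivity forced by $\mathrm{Gal}_\Delta(M/L) = 0$ are essential. The remaining ingredients — the derivative identity, the use of $M^\delta = F$, and the correspondence $M^G = K$ — are routine, and I would only need to handle the degenerate case $G = \{1\}$ (equivalently $M = K$) separately.
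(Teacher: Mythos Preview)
Your proof is correct and follows the same overall strategy as the paper. The forward implication is identical: from $q\in K$ with $\delta q+pq=1$ you (and the paper) compute $\delta(q\,\delta\eta)=\delta\eta$, use $M^\delta=F$ to conclude $\eta\in L$, hence $M=L$ and $\mathrm{Gal}_\Delta(M/L)=0$.

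For the converse the paper is much terser: it asserts directly that $\mathrm{Gal}_\Delta(M/L)=0$ forces $\sigma(\delta\eta/\eta)=\delta\eta/\eta$ for every $\sigma\in\mathrm{Gal}_\Delta(M/K)$, so $r:=\eta/\delta\eta\in K$, and then differentiates $r\,\delta\eta=\eta$ to get $\delta r+pr=1$. Your explicit cocycle computation actually clarifies a point the paper glosses over: the literal claim $\sigma(\eta/\delta\eta)=\eta/\delta\eta$ is equivalent to $b_\sigma=0$ for all $\sigma$, but $\mathrm{Gal}_\Delta(M/L)=0$ only gives this for $\sigma$ with $a_\sigma=1$. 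What is true in general---and what your argument proves---is that injectivity of $\sigma\mapsto a_\sigma$ together with commutativity of the image yields $b_\sigma=c(a_\sigma-1)$ for a single $c\in F$, so that the \emph{shifted} quotient $q_c=(\eta+c)/\delta\eta$ is the $G$-invariant element (and it still satisfies $\delta q_c+pq_c=1$ since $\delta c=0$). In effect the paper's claim holds after replacing $\eta$ by $\eta+c$, and your argument supplies exactly this missing normalization. So your converse is the same idea carried out more carefully, not a different route.
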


\begin{proof} If $r\in K$ satisfies $\delta r+pr=1$, we have that \[\delta (r\delta\eta)=(\delta r+pr)\delta\eta=\delta\eta.\] Therefore $\delta(\eta -r\delta\eta)=0$, which implies that $\eta\in L$ because $M^\delta=L^\delta=F$.

For the opposite implication, suppose that $\mathrm{Gal}_\Delta(M/L)=0$. Then $\sigma\left(\frac{\delta\eta}{\eta}\right)=\frac{\delta\eta}{\eta}$ for every $\sigma\in\mathrm{Gal}_\Delta(M/K)$ (cf. the discussion in \S\ref{section:main}), and therefore there exists $r\in K$ such that $r\delta\eta=\eta$. Applying $\delta$ on both sides of the latter equation, and using the fact that $\delta^2\eta=p\delta\eta$ and $\delta\eta\neq0$, we obtain that $\delta r +pr=1$.\end{proof}

\begin{prop}\label{equiv-2} Conditions \eqref{condition1} and \eqref{condition2} of Theorem~\ref{main} are satisfied if and only if $\mathrm{Gal}_\Delta(M/L)=\mathbb{G}_a(F)$.\end{prop}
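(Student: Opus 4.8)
The plan is to read off the answer from the triangular shape of $\mathrm{Gal}_\Delta(M/K)$ in \eqref{full-ppv}, combining Lemmas~\ref{const-diag} and~\ref{trivial-radical}, Theorem~\ref{dag1}, and Proposition~\ref{equiv-1}. Write $V:=\mathrm{Gal}_\Delta(M/L)$. The first thing I would record is a conjugation formula: for $\sigma=\left(\begin{smallmatrix}1&b_\sigma\\0&a_\sigma\end{smallmatrix}\right)\in\mathrm{Gal}_\Delta(M/K)$ and $\rho=\left(\begin{smallmatrix}1&b\\0&1\end{smallmatrix}\right)\in V$, a direct matrix computation gives $\sigma\rho\sigma^{-1}=\left(\begin{smallmatrix}1&b/a_\sigma\\0&1\end{smallmatrix}\right)$. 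Since $V$ is normal in $\mathrm{Gal}_\Delta(M/K)$, this will show that $V$, viewed inside $\mathbb{G}_a(F)$, is stable under multiplication by $a_\sigma$ for every $\sigma$. This stability is the engine of the whole argument.

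For the implication $\eqref{condition1}\wedge\eqref{condition2}\Rightarrow V=\mathbb{G}_a(F)$, I would argue by contradiction. Condition~\eqref{condition2} and Lemma~\ref{trivial-radical} give $V\neq 0$. If $V\neq\mathbb{G}_a(F)$, then by Theorem~\ref{dag1} we have $V=\{b:\mathcal{D}b=0\}$ for a nonzero monic $\mathcal{D}\in F[\partial]$, a finite-dimensional $F^\partial$-vector space, say of dimension $m$. Fixing $0\neq b_0\in V$, the stability under multiplication by $a_\sigma$ yields $b_0,a_\sigma b_0,\dots,a_\sigma^m b_0\in V$; these $m+1$ elements must be $F^\partial$-linearly dependent, so $a_\sigma$ is algebraic over $F^\partial$, and differentiating its minimal polynomial forces $\partial a_\sigma=0$. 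As $\sigma$ is arbitrary, every $a_\sigma$ is $\partial$-constant, so Lemma~\ref{const-diag} produces a solution in $K$ of $\delta Y=\partial p$, contradicting~\eqref{condition1}. Hence $V=\mathbb{G}_a(F)$.

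For the converse $V=\mathbb{G}_a(F)\Rightarrow\eqref{condition1}\wedge\eqref{condition2}$, condition~\eqref{condition2} is immediate from $V\neq 0$ and Lemma~\ref{trivial-radical}. To prove~\eqref{condition1}, I would suppose for contradiction that $\delta Y=\partial p$ has a solution $q\in K$. Setting $u:=\delta\eta$ and $g:=\partial u/u$, and using $\delta u=pu$, one computes $\delta g=\partial p=\delta q$, so $g-q\in M^\delta=F$ and thus $g\in K$. Consequently $\partial^j u=G_j u$ with $G_j:=(\partial+g)^j(1)\in K$, and since $\delta(\partial^j\eta)=\partial^j u=G_j u$, any identity $\sum_j c_jG_j=\delta r+pr$ with $r\in K$ and $c_j\in F$ forces $\delta\bigl(\sum_j c_j\partial^j\eta-ru\bigr)=0$, whence $\sum_j c_j\partial^j\eta\in ru+M^\delta\subseteq L$. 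By Proposition~\ref{equiv-1}, exhibiting one such nontrivial relation shows that $\eta$ is $\partial$-algebraic over $L$ and hence $V\neq\mathbb{G}_a(F)$, the sought contradiction. So everything reduces to finding $c_0,\dots,c_n\in F$, not all zero, with $\sum_j c_jG_j\in(\delta+p)(K)$.

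The hard part will be exactly this last step. The hypothesis $\delta g=\partial p$ is precisely the statement that $D:=\delta+p$ and $P:=\partial+g$ commute on $K$, so $P$ descends to $K/D(K)$ and $G_j=P^j(1)$; I must show that the $F$-span of the classes $\{G_j\bmod D(K)\}_{j\ge 0}$ is finite-dimensional. I expect this to be the main obstacle, because $K/D(K)$ is itself typically infinite-dimensional over $F$, so the dependence cannot come from the cokernel alone but must exploit the integrable pair $(D,P)$, which confines the $P$-orbit of $1$ to a finite-dimensional piece. I would establish this by a local analysis at the finitely many poles of $p$ and $g$ in $\mathbb{P}^1(\overline{F})$, using integrability to bound the pole contributions of the $G_j$ modulo $D(K)$; this is a concrete incarnation of the finite-dimensionality (with its Gauss–Manin action) of the de Rham cohomology of the rank-one connection determined by $(D,P)$. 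Once that finiteness is in hand, the required relation, and with it the whole proposition, follows formally.
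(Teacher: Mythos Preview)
Your argument is correct in outline but follows a genuinely different path from the paper's, and the two directions differ in how self-contained they are.

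For the implication $\eqref{condition1}\wedge\eqref{condition2}\Rightarrow V=\mathbb{G}_a(F)$, your conjugation-and-normality argument is exactly the content of \cite[Lem.~3.6(2)]{hardouin-singer:2008}, which the paper simply cites: a nontrivial proper $V\subset\mathbb{G}_a(F)$ forces every $a_\sigma$ into $F^\partial$, and then Lemma~\ref{const-diag} contradicts~\eqref{condition1}. Your version has the merit of being entirely self-contained.

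For the converse, the paper argues structurally: if \eqref{condition1} fails, Lemma~\ref{const-diag} gives $\mathrm{Gal}_\Delta(L/K)\subseteq\mathbb{G}_m(F^\partial)$, and then two results of Singer~\cite{singer:2011} (Lem.~3.2 when the quotient is finite, and the argument on pp.~159--160 ruling out the full group $\{(\begin{smallmatrix}1&b\\0&a\end{smallmatrix}):\partial a=0\}$ as a $\mathrm{PPV}$-group over $F(x)$) force $V\subsetneq\mathbb{G}_a(F)$. You instead compute directly: from $g:=\partial(\delta\eta)/\delta\eta\in K$ you build $G_j=(\partial+g)^j(1)$ and reduce to finding a nontrivial $F$-relation among the $G_j$ modulo $(\delta+p)(K)$. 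Your reduction is correct, and so is your diagnosis that the commuting pair $(D,P)$ confines the $G_j$ to the subring $V_S\subset K$ of functions with poles only in the finite set $S$ of poles of $p$ and $g$; one then checks $D(K)\cap V_S=D(V_S)$, so the question becomes the finite-dimensionality of $V_S/D(V_S)=H^1_{\mathrm{dR}}(\mathbb{P}^1\smallsetminus S,\nabla)$ for the rank-one connection $\nabla=d+p\,dx$. This is a true and classical fact, but your ``local analysis at the poles'' is only a sketch; turning it into a proof requires either citing the finiteness of algebraic de~Rham cohomology for connections on smooth affine curves or carrying out an explicit pole-order bookkeeping. That finiteness is essentially what underlies the results of~\cite{singer:2011} invoked by the paper, so your route trades a black-box citation for a different (and not lighter) one. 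Either way the proposition goes through; the paper's path is shorter given the available literature, while yours makes the cohomological mechanism explicit.
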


\begin{proof} We begin by proving the necessity of the conditions. We recall that condition~\eqref{condition1} (resp. condition~\eqref{condition2}) of Theorem~\ref{main} states that the equation $\delta Y=\partial p$ (resp. $\delta Y+pY=1$) does not admit a solution in $K$. By Lemma~\ref{trivial-radical}, if condition~\eqref{condition2} fails, then $\mathrm{Gal}_\Delta(M/L)=0$. Let us show that, if condition~\eqref{condition1} fails, then $\mathrm{Gal}_\Delta(M/L)\neq \mathbb{G}_a(F)$. By Lemma~\ref{const-diag}, the existence of $q\in K$ such that $\delta q=\partial p$ implies that $\mathrm{Gal}_\Delta(L/K)\subseteq\mathbb{G}_m(F^\partial)$, which implies that $\mathrm{Gal}_\Delta(L/K)$ is either finite or equal to $\mathbb{G}_m(F^\partial)$. If $\mathrm{Gal}_\Delta(L/K)$ is finite, \cite[Lem. 3.2]{singer:2011} shows that $\mathrm{Gal}_\Delta(M/L)$ is a proper subgroup of $\mathbb{G}_a(F)$. In \cite[pp. 159--160]{singer:2011}, it is shown that the linear differential algebraic group\[ \left\{\begin{pmatrix} 1 & b \\ 0 & a\end{pmatrix} \ \middle| \ a,b\in F, \ a\neq 0, \ \partial a=0 \right\}   \] cannot be a $\mathrm{PPV}$-group over $K$, as an application of \cite[Thm. 1.1]{singer:2011}. Therefore, if $\mathrm{Gal}_\Delta(L/K)=\mathbb{G}_m(F^\partial)$, we have that $\mathrm{Gal}_\Delta(M/L)$ is a proper subgroup of $\mathbb{G}_a(F)$.

Let us prove the sufficiency of the conditions. By \cite[Lem. 3.6(2)]{hardouin-singer:2008}, if $\mathrm{Gal}_\Delta(M/L)$ were a nontrivial proper subgroup of $\mathbb{G}_a(F)$, we would have that $\mathrm{Gal}_\Delta(L/K)\subseteq\mathbb{G}_m(F^\partial)$. By Lemma~\ref{const-diag}, this conclusion is equivalent to the failure of condition~\eqref{condition1}. Therefore, condition~\eqref{condition1} implies that $\mathrm{Gal}_\Delta(M/L)$ is either $0$ or all of $\mathbb{G}_a(F)$. By Lemma~\ref{trivial-radical}, condition~\eqref{condition2} implies that $\mathrm{Gal}_\Delta(M/L)$ is not $0$, whence conditions \eqref{condition1} and \eqref{condition2} together imply that $\mathrm{Gal}_\Delta(M/L)=\mathbb{G}_a(F)$.\end{proof}

%%% ACKNOWLEDGEMENTS %%%-----------------

\begin{acknowledgements} Professor Alexey Ovchinnikov patiently read several drafts of this work and made numerous suggestions for improvements and corrections. Professor Sergey Gorchinskiy kindly pointed out to me several mistakes in an earlier version of this paper, and made several suggestions leading to the correct formulations of Theorem~\ref{main} and Corollary~\ref{original-cor}. It is a pleasure to acknowledge their essential contributions to the present form of this work. Any errors or inaccuracies that might remain are my own.
\end{acknowledgements}

%%% BIBLIOGRAPHY %%%-------------------------------

\bibliography{gammabib}{} \bibliographystyle{spmpsci} \nocite{*}

\end{document}